\tikzset{cross/.style={cross out, draw=black, minimum size=2*(#1-\pgflinewidth), inner sep=0pt, outer sep=0pt, line width=2pt},cross/.default={10pt}}
\tikzstyle{vertex}=[circle,draw=black,fill=black,inner sep=0,minimum size=5pt,text=white,font=\footnotesize]
\definecolor{darkgreen}{rgb}{0.1,0.7,0.1}
\let\expandafter\oldproof\csname\string\proof\endcsname
\let\oldendproof\endproof
\renewenvironment{proof}[1][\proofname]{%
	\oldproof[\bf #1]%
}{\oldendproof}
\newcommand{\dist}{\mathsf{dist}}
\newcommand{\eps}{\varepsilon}
\theoremstyle{plain}
\newtheorem{theorem}{Theorem}[section]
\newtheorem{lemma}[theorem]{Lemma}
\newtheorem{conjecture}[theorem]{Conjecture}
\newtheorem{question}[theorem]{Question}
\theoremstyle{definition}
\newtheorem{definition}[theorem]{Definition}
\title{Restricted subgraphs of edge-colored graphs and applications}
\author{
Benny Sudakov\thanks{Department of Mathematics, ETH Z\"urich, Switzerland. Email: benjamin.sudakov@math.ethz.ch. Research supported in part by SNSF grant 200021-228014.}
}
\date{}
\begin{document}

\maketitle

\begin{abstract}
A properly edge-colored graph is a graph with a coloring of its edges such
that no vertex is incident to two or more edges of the same color. A subgraph
is called rainbow if all its edges have different colors. The problem of finding
rainbow subgraphs or other restricted structures in edge-colored graphs has
a long history, dating back to Euler's work on Latin squares. It has also proven
to be a powerful method for studying several well-known questions in other areas.

In this survey, we will provide a brief introduction to this topic, discuss
several results in this area, and demonstrate their applications to problems
in graph decomposition, additive combinatorics, theoretical computer science, and
coding theory.

\end{abstract}

\section{Introduction}
An edge-colored graph is a graph $G$ with assignment of colors to its edges. A subgraph $H$ of an edge-colored graph $G$ is called rainbow if all the edges of $H$ have different colors.
The study of rainbow subgraphs and  other restricted structures in edge-colored graphs goes back more than two hundred years to the work of Euler \cite{Euler} on Latin squares.
A Latin square of order  $n$ is an $ n \times n$  array filled with symbols from $\{1, \ldots, n\}$ such that each symbol appears once in every row and column. 
Well known examples of Latin squares are multiplication tables of finite groups. Latin squares have also connections to $2$-dimensional permutations, design theory, finite projective planes and error correcting codes (see, e.g., \cite{Wan, KPSY}). 
A pair of Latin squares $P=(p_{i,j})$ and $Q=(q_{i,j})$ are called mutually orthogonal if all the $n^2$ ordered pairs $(p_{i,j}, q_{i,j})$ are distinct. 
According to folklore, Catherine the Great asked Euler, who was residing at her court at  St.\ Petersburg at the end of 17th century the following question.
This question is known as the thirty-six officers problem and is equivalent to the existence of mutually orthogonal Latin squares of order $6$. Euler introduced it as follows:
\begin{quote}
``The question revolves around arranging 36 officers to be drawn from 6 different regiments so that they are ranged in a square so that in each line (both horizontal and vertical) there are 6 officers of different ranks and different regiments."
\end{quote}
Euler could not solve the problem, but was able to demonstrate the existence of the mutually orthogonal Latin squares when $n$ is odd or a multiple of $4$. Observing that no order two square exists and being unable to find a construction for $n=6$, he conjectured that no mutually orthogonal Latin squares of order $n$ exist for any even $n=2~(\!\!\!\!\mod 4)$

Euler's question is equivalent to asking for which values $n$ there is a Latin square which can be decomposed into $n$ disjoint transversals, where a transversal in a Latin square is a collection of cells which do not share the same row, column or symbol.
Indeed, given a Latin square $P=(p_{i,j})$ which is a disjoint union of transversals $T_1, \ldots, T_n$ one can construct another $ n \times n$ array $Q=(q_{i,j})$ by changing every entry in the traversal $T_i$ to be the symbol $i$. It is easy to check from the definition of transversal, that the resulting $Q$ is a Latin square which is orthogonal to $P$. 
The non-existence of orthogonal squares of order six was confirmed by Gaston Tarry \cite{Tarry} only in 1900. However, Euler's conjecture resisted solution for another 60 years until Parker, Bose, and Shrikhande \cite{BSP} in 1960 constructed mutually orthogonal Latin squares for all orders $n \geq 7$, showing that Euler's conjecture is false. 

Note that every Latin square $L=(\ell_{i,j})$ corresponds to an edge-coloring of the complete bipartite graph $K_{n,n}$ by coloring the edge $ij$ by the symbol $\ell_{i,j}$ from the corresponding cell $(i,j)$. It is easy to check that this gives a proper edge-coloring of $K_{n,n}$, using $n$ colors.
An edge-coloring is proper if no two edges sharing a vertex receive the same color. 
Identifying the cell $(i,j)$ with the edge $ij$, a
transversal in $L$ corresponds to a rainbow perfect matching (a collection of disjoint edges covering all the vertices), see Figure 1. Thus, finding transversals in Latin squares is a special case of finding rainbow subgraphs in properly edge-colored graphs. Another reason to study rainbow subgraphs arises in Ramsey theory, more precisely in the canonical version of Ramsey's theorem proved by Erd\H{o}s and Rado \cite{ErdosRado}. This theorem shows that locally-bounded edge-colorings of the complete graph $K_n$ contain rainbow copies of certain cliques. An edge-coloring is locally $k$-bounded if each vertex is in at most $k$ edges of any one color.
\begin{figure}[h]

\begin{minipage}{0.24\textwidth}
    \centering
    %Latin Square
\begin{tikzpicture}[scale=0.8]
    \fill[blue!80]  (0,0) rectangle (1,1); 
    \fill[red!80]   (0,1) rectangle (1,2); 
    \fill[green!80]  (0,2) rectangle (1,3); 

    \fill[green!80]  (1,0) rectangle (2,1); 
    \fill[blue!80]   (1,1) rectangle (2,2);  
    \fill[red!80]  (1,2) rectangle (2,3); 

    \fill[red!80]  (2,0) rectangle (3,1); 
    \fill[green!80]   (2,1) rectangle (3,2);  
    \fill[blue!80]  (2,2) rectangle (3,3); 

    \foreach \x in {0,1,2,3} {
        \draw (\x,0) -- (\x,3);
        \draw (0,\x) -- (3,\x);
    }

    % Add column labels
    \node at (0.5,3.5) {1};
    \node at (1.5,3.5) {2};
    \node at (2.5,3.5) {3};

    % Add row labels
    \node at (-0.5,2.5) {$a$};
    \node at (-0.5,1.5) {$b$};
    \node at (-0.5,0.5) {$c$};
\end{tikzpicture}

\end{minipage}
\begin{minipage}{0.24\textwidth}
\vspace{0.8cm}
    \centering
    % Graph coloring
    \begin{tikzpicture}[scale=0.8]
    \node[vertex, label=above:$1$] (1) at (0, 2) {};
    \node[vertex, label=above:$2$] (2) at (2, 2) {};
    \node[vertex, label=above:$3$] (3) at (4, 2) {};

    \node[vertex, label=below:$a$] (a) at (0, 0) {};
    \node[vertex, label=below:$b$] (b) at (2, 0) {};
    \node[vertex, label=below:$c$] (c) at (4, 0) {};

    \draw[green, line width=0.5mm]  (a) -- (1);
    \draw[red, line width=0.5mm]    (b) -- (1);
    \draw[blue, line width=0.5mm]   (c) -- (1);
    
    \draw[red, line width=0.5mm]    (a) -- (2);
    \draw[blue, line width=0.5mm]   (b) -- (2);
    \draw[green, line width=0.5mm]  (c) -- (2);

    \draw[blue, line width=0.5mm]   (a) -- (3);
    \draw[green, line width=0.5mm]  (b) -- (3);
    \draw[red, line width=0.5mm]    (c) -- (3);
\end{tikzpicture}
\end{minipage}
\begin{minipage}{0.24\textwidth}
\centering
    \begin{tikzpicture}[scale=0.8]
    \fill[blue!80]  (0,0) rectangle (1,1); 
    \fill[red!80]   (0,1) rectangle (1,2); 
    \fill[green!80]  (0,2) rectangle (1,3); 

    \fill[green!80]  (1,0) rectangle (2,1); 
    \fill[blue!80]   (1,1) rectangle (2,2);  
    \fill[red!80]  (1,2) rectangle (2,3); 

    \fill[red!80]  (2,0) rectangle (3,1); 
    \fill[green!80]   (2,1) rectangle (3,2);  
    \fill[blue!80]  (2,2) rectangle (3,3); 

    \foreach \x in {0,1,2,3} {
        \draw (\x,0) -- (\x,3);
        \draw (0,\x) -- (3,\x);
    }

    % Add column labels
    \node at (0.5,3.5) {1};
    \node at (1.5,3.5) {2};
    \node at (2.5,3.5) {3};

    % Add row labels
    \node at (-0.5,2.5) {$a$};
    \node at (-0.5,1.5) {$b$};
    \node at (-0.5,0.5) {$c$};

    \draw (2.5, 0.5) node[cross,white] {};
    \draw (1.5, 1.5) node[cross,white] {};
    \draw (0.5, 2.5) node[cross,white] {};
    
\end{tikzpicture}
\end{minipage}
\begin{minipage}{0.24\textwidth}
\vspace{0.8cm}
\centering
    \begin{tikzpicture}[scale=0.8]
    \node[vertex, label=above:$1$] (1) at (0, 2) {};
    \node[vertex, label=above:$2$] (2) at (2, 2) {};
    \node[vertex, label=above:$3$] (3) at (4, 2) {};

    \node[vertex, label=below:$a$] (a) at (0, 0) {};
    \node[vertex, label=below:$b$] (b) at (2, 0) {};
    \node[vertex, label=below:$c$] (c) at (4, 0) {};

    \draw[red!30, line width=0.5 mm]    (b) -- (1);
    \draw[blue!30, line width=0.5mm]   (c) -- (1);
    \draw[red!30, line width=0.5mm]    (a) -- (2);
    \draw[green!30, line width=0.5mm]  (c) -- (2);
    \draw[blue!30, line width=0.5mm]   (a) -- (3);
    \draw[green!30, line width=0.5mm]  (b) -- (3);

    \draw[green, line width=1 mm]  (a) -- (1);
    \draw[blue, line width=1 mm]   (b) -- (2);
    \draw[red, line width=1 mm]    (c) -- (3);
\end{tikzpicture}
\end{minipage}
\caption{Correspondence between a Latin square and a properly edge-colored complete bipartite graph, on the left, and between a transversal and rainbow perfect matching, on the right.}
    \label{fig:enter-label}
\end{figure}
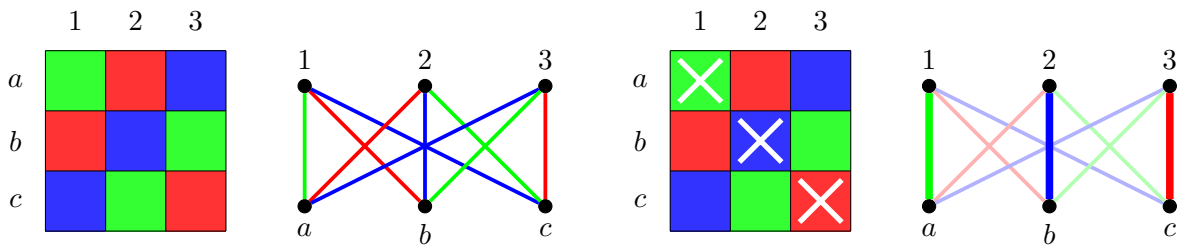

In this short survey we discuss the following general question: Given a particular class of edge-colored graphs, what rainbow or other edge-constrained
subgraphs  do they contain? We present several results in this area and demonstrate their applications to problems
in graph decomposition, additive combinatorics, theoretical computer science, and
coding theory. It is of course impossible to cover everything in such a short article, and therefore the
choice of results we present is inevitably biased. Yet, we hope to describe enough
examples and problems from this fascinating subject to appeal to
researchers not only in discrete mathematics but in other areas as well.

\section{Cycles with edge-coloring constraints}
The study of cycles in graphs has long been one of the central topics in graph theory. A basic result in this area says that a maximal acyclic graph is a tree, i.e., an $n$-vertex graph with $n$ edges contains a cycle. Finding cycles in an appropriately defined auxiliary graph often corresponds to finding a certain dependency between the objects studied. In many situations this is too simplistic since one can not capture all relevant properties of the underlying objects by a simple graph. As we will see, in some cases encoding the relations among studied objects via a properly edge-colored graph gives one significantly more power. In this situation however, one usually needs to find a cycle with some special properties. This leads us to the following very natural question. How many edges in a properly edge-colored graph will guarantee that there exists a cycle satisfying certain edge-coloring constraints?

One of the most basic constraints is to ask that a cycle have a color which appears uniquely, namely a color that is distinct from the colors of all the remaining edges of this cycle. This is clearly a very special case of the rainbow cycle problem in which all edges must have a unique color. This problem was first proposed in 2007 by Keevash, Mubayi, Sudakov and Verstra\"ete \cite{KMSV}, who observed that, somewhat surprisingly, a linear number of edges is no longer enough to guarantee such a cycle. The construction which shows this uses Cayley sum graphs together with their canonical edge coloring.

\begin{definition}
    Given a group $\Gamma$ and a subset $S \subset \Gamma$, let $G$  be the graph whose vertices are the elements of $\Gamma$. Two vertices $x, y$ are adjacent if and only if $x+y=s$ for some $s \in S$. Moreover, $G$ has a canonical proper edge coloring in which we color edge $(x,y)$ by color $s=x+y$. 
\end{definition}

To get an example of a properly edge-colored graph without a cycle with a unique color consider the 
Cayley sum graph $G$ of the Abelian group $\mathbb{F}_2^k$ with respect to the standard basis $S=\{e_1, e_2, \ldots e_k\}$, where $e_i$ is a vector whose $i$-th coordinate is one and the rest are zero. This graph has $n=2^k$ vertices, it is $k$-regular and it is properly edge-colored with the color of the edge $(x,y)$ being $i$ if $x+y=e_i$. Hence, $G$ has $nk/2=\frac{1}{2}n\log_2n$ edges. Moreover, the sum of the basis vectors corresponding to colors of the edges of a cycle in $G$ equals twice the sum of elements corresponding to the vertices of this cycle. In other words, since we are working over $\mathbb{F}_2$, the colors of any cycle in $G$ correspond to the collection of basis vectors whose sum is zero. Since these vectors are linearly independent we have that each basis vector (or each color) should appear in every cycle an even number of times. Thus no cycle has an edge of unique color and we have a construction which shows that more than $\frac{1}{2}n\log_2n$ edges are needed in order to guarantee a cycle with a unique color in a graph.

\begin{figure}[ht]

    \centering
    \begin{minipage}{0.4\textwidth} % Adjust the width as needed
        \centering
\begin{tikzpicture}[scale=0.45,line width=2pt]
%\clip(-3.46,-5.2) rectangle (4.98,0.04);

\draw[red] (-2.88,-4.12)-- (0.04,-5);
\draw[blue]  (0.04,-5)-- (2.9804315517300366,-4.190887962275627);
\draw[red]  (2.9804315517300366,-4.190887962275627)-- (0.06043155173003667,-3.3108879622756273);
\draw[blue]  (0.06043155173003667,-3.3108879622756273)-- (-2.88,-4.12);
\draw[green]  (-2.88,-4.12)-- (-1.5388377953303893,-1.3810067651113584);
\draw[blue] (1.3811622046696106,-2.2610067651113583)-- (4.321593756399647,-1.4518947273869856);
\draw[red]  (4.321593756399647,-1.4518947273869856)-- (1.4015937563996472,-0.5718947273869857);
\draw[blue]  (1.4015937563996472,-0.5718947273869857)-- (-1.5388377953303893,-1.3810067651113584);
\draw[green] (1.3811622046696106,-2.2610067651113583)-- (0.04,-5);
\draw[green]  (4.321593756399647,-1.4518947273869856)-- (2.9804315517300366,-4.190887962275627);
\draw[green] (1.4015937563996472,-0.5718947273869857)-- (0.06043155173003667,-3.3108879622756273);
\draw[red]  (-1.5388377953303893,-1.3810067651113584)-- (1.3811622046696106,-2.2610067651113583);

\draw [fill=black] (-2.88,-4.12) circle (4pt);
\draw [fill=black]  (0.04,-5) circle (4pt);
\draw [fill=black]  (0.06043155173003667,-3.3108879622756273) circle (4pt);
\draw [fill=black]  (-1.5388377953303893,-1.3810067651113584) circle (4pt);
\draw [fill=black] (1.3811622046696106,-2.2610067651113583) circle (4pt);
\draw [fill=black]  (1.4015937563996472,-0.5718947273869857) circle (4pt);
\draw [fill=black]  (2.9804315517300366,-4.190887962275627) circle (4pt);
\draw [fill=black]  (4.321593756399647,-1.4518947273869856) circle (4pt);
\end{tikzpicture}
\end{minipage}%
    \hfill
    \begin{minipage}{0.6\textwidth} % Adjust the width as needed
        \captionof{figure}{
           Cayley sum graph of the group $\mathbb{F}_2^3$ with respect to the standard basis and its canonical edge coloring.
        }
        \label{fig:cayley-sum-graph}
    \end{minipage}

\end{figure}
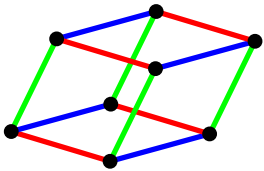

The above bound has the right order of magnitude. This was shown in \cite{KMSV},
using an argument which counts the number of rainbow paths in the edge-colored graph. Moreover the proof produces a cycle with an edge of unique color that 
has at most logarithmic length.

\begin{lemma}
\label{cycle_unique_color}
    Any properly edge-colored $n$-vertex graph with at least $2n \log_2 n$ edges contains a cycle of length at most $2\log_2 n$ with an edge of unique color.
\end{lemma}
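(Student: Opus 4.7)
The plan is to argue by contradiction: suppose that no cycle in $G$ of length at most $2\log_2 n$ contains an edge of unique color. First, I would pass to a subgraph $H\subseteq G$ with $\delta(H)\geq 2\log_2 n$, using the standard degeneracy fact that every graph with $m$ edges has a subgraph of minimum degree at least $m/n$ (iteratively delete a vertex whose degree in the current graph is below $m/n$). Since $H$ inherits the proper edge-coloring and every cycle of $H$ is a cycle of $G$, the contradiction hypothesis carries over to $H$.

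Fix $v_0\in V(H)$ and set $L=\lfloor\log_2 n\rfloor$, so that $\delta(H)\geq 2L$. I would lower-bound the number $R_L$ of rainbow paths of length $L$ in $H$ starting at $v_0$ by the following inductive claim: for any rainbow path $v_0v_1\cdots v_i$ with $i\leq L-1$, the number of rainbow extensions at $v_i$ is at least $\delta(H)-i\geq 2L-i$. Indeed, among the $d_H(v_i)$ edges at $v_i$ one is the path edge to $v_{i-1}$, at most $i-1$ more carry a color in $\{c_1,\ldots,c_{i-1}\}$ by properness, and no edge at $v_i$ can lead to an earlier path vertex $v_j$ with $j\leq i-2$: such a chord would close a cycle of length $i-j+1\leq L\leq 2\log_2 n$ whose path colors $c_{j+1},\ldots,c_i$ are distinct, so at least $i-j-1\geq 1$ of them appear uniquely in the cycle, contradicting the assumption. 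Iterating the claim gives $R_L\geq (2L)(2L-1)\cdots(L+1)=(2L)!/L!$.

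By pigeonhole over the at most $n$ possible right endpoints, some vertex $v^*\neq v_0$ is the endpoint of at least $R_L/n$ rainbow paths from $v_0$. The complementary upper bound is that for any fixed set $S$ of $L$ colors, at most $L!$ rainbow paths from $v_0$ use exactly the colors in $S$, since each ordering of $S$ determines at most one such path: properness pins down the unique edge of the chosen color at $v_0$, then at the next vertex, and so on down the path. Dividing, the rainbow paths from $v_0$ to $v^*$ realize at least $R_L/(n\,L!)\geq \binom{2L}{L}/n$ distinct color sets, which by $\binom{2L}{L}\geq 4^L/\sqrt{\pi L}$ and $4^L\geq n^2/4$ is at least $n/(4\sqrt{\pi\log_2 n})\geq 2$ for every $n$ at which the hypothesis $m\geq 2n\log_2 n$ is non-vacuous (the few small $n$ being dispatched by inspection).

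Pick two rainbow paths $P_1,P_2$ from $v_0$ to $v^*$ with $\mathrm{colors}(P_1)\neq\mathrm{colors}(P_2)$, choose $c^*\in\mathrm{colors}(P_1)\setminus\mathrm{colors}(P_2)$, and let $e^*$ be the unique edge of $P_1$ of color $c^*$. Since $P_1$ and $P_2$ share both endpoints, every vertex of $P_1\triangle P_2$ has even degree, so this symmetric difference decomposes into edge-disjoint cycles; let $C$ be the cycle containing $e^*$. Because $P_1$ is rainbow and $P_2$ has no edge of color $c^*$, the edge $e^*$ is the only color-$c^*$ edge in $P_1\triangle P_2$, hence also in $C$, and $|E(C)|\leq|E(P_1\triangle P_2)|\leq|P_1|+|P_2|=2L\leq 2\log_2 n$, giving the forbidden short cycle with a uniquely colored edge. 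The main obstacle I anticipate is the counting in the third paragraph: the per-color-set cap $L!$ almost exactly cancels the $L!$ in $R_L=\binom{2L}{L}\cdot L!$, so one must harvest an extra factor of $n$ from $\binom{2L}{L}\approx n^2/\sqrt{\log_2 n}$ to survive both the endpoint pigeonhole and the color-set quotient, which is precisely what forces the constant $2$ in both $\delta(H)\geq 2\log_2 n$ and the edge bound $2n\log_2 n$.
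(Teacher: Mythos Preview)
Your proof is correct and follows essentially the same strategy as the paper's: pass to a subgraph of minimum degree $\geq 2\log_2 n$, double-count rainbow paths of length $\approx \log_2 n$ (lower bound by greedy extension, upper bound via the observation that two rainbow paths between the same endpoints with different color sets would yield a short cycle with a uniquely colored edge), and derive a contradiction. The only cosmetic differences are that the paper sums over all starting vertices rather than fixing one, and it leaves the ``different color sets $\Rightarrow$ cycle with unique color'' step implicit in the upper bound, whereas you spell out the symmetric-difference extraction explicitly.
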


\begin{proof}
Let $G$ be a properly edge-colored $n$-vertex graph  with at least $2n \log_2 n$ edges. By deleting vertices of low degree we can assume that the minimum degree of $G$ is at least $r=2\log_2 n$. Let $\ell= \log_2 n$ and for the sake of contradiction, suppose $G$ contains no cycle of length at most $2\ell$ with an edge of unique color.
We will double-count the number of rainbow paths in $G$ of length $\ell$. Note that counting rainbow paths is the same as counting rainbow walks, since if the rainbow walk intersects itself we immediately get a desired rainbow cycle. In order to give a lower bound on the number of rainbow paths we can then take a vertex $v_1 \in V(G)$ and greedily count the number of rainbow paths in $G$ of the form $v_1v_2v_3 \ldots v_{\ell+1}$. By the minimum degree assumption and since $G$ is properly edge-colored, we have at least $r$ options for $v_2$, then at least $r-1$ options for $v_3$, as the second edge of the path should have a different color from the first edge. With the same reasoning we have at least $r-(i-2)$ options for $v_i$, concluding that the number of such rainbow paths is at least $n \prod_{j=0}^{\ell-1} (r-j)\geq n\ell^\ell$. 

On the other hand, since there is no cycle with an edge of unique color, we can upper bound the number of rainbow paths as follows. For each pair of vertices $x,y$ (there are at most $n^2$ of them), observe that if there is a rainbow path of length $\ell$ between them using a set of colors $\cal C$, then any other rainbow path between $x$ and $y$ must use the same set of colors $\cal C$. Therefore, there is at most $\ell!$ rainbow paths of length $\ell$ between any pair of vertices. Since $n^2\ell!<n\ell^\ell$ for $\ell= \log_2 n$ and large enough $n$, this contradiction completes the proof.
\end{proof}

\subsection{Even covers and locally decodable codes}

The above simple lemma has interesting applications to problems in theoretical computer science and coding theory. To describe these applications we need the following definition.
\begin{definition}
A non-empty collection of sets $\cal F$ is called an even cover 
if it covers every vertex an even number of times. Equivalently, $\cal F$ is an even cover if 
$\sum_{E \in {\cal F}} v_E = 0$ over $\mathbb{F}_2$, where $v_E$ denotes the characteristic vector of $E$.
The characteristic vector of a subset $E$ is a $0/1$-vector whose coordinates corresponding to the elements of $E$ are $1$ and the rest of whose coordinates are $0$.
\end{definition}

A hypergraph is just a collection of subsets called edges and a hypergraph is $k$-uniform if every subset has size $k$.
When $\cal H$ is a $2$-uniform hypergraph, i.e.\ a usual graph, an even cover is simply an even subgraph, i.e.\ a subgraph whose vertices all have even degree. Such a subgraph is a union of edge-disjoint cycles in $\cal H$, and so an even cover with the smallest number of edges must in fact be a simple cycle. Hence, the length of the smallest even cover equals to the length of the shortest cycle of $\cal H$, which we call girth. The extremal trade-off between the size of $\cal H$ (i.e., the number of edges in the graph) and its girth was conjectured by Bollob\'as and confirmed by Alon, Hoory and Linial~\cite{AlonHL02} who proved the \emph{irregular Moore bound}, which says that every graph on $n$ vertices with average degree $d$ has girth at most $2 \lceil \log_{d-1}(n) \rceil$. It is an outstanding open problem whether the constant $2$ in this bound can be further improved (for the best known constant, see \cite{LUW}). 

Even covers in $k$-uniform hypergraphs also correspond to linearly dependent subsets of a system of linear equations over $\mathbb{F}_2$ which have exactly $k$ nonzero coefficients (such equations are called $k$-sparse). To see why, let us associate a variable $x_v$ with each vertex $v \in V(\cal H)$ and the $k$-sparse equation $\sum_{v \in E} x_v = b_E,  b_E \in \mathbb{F}_2$ with each edge $E$ in $\cal H$. Then, observe that for any even cover $\cal F$, the left hand sides of the equations corresponding to $E \in {\cal F}$ add up to $0$ and are thus linearly dependent. Thus, size vs girth trade-offs for $k$-uniform hypergraphs correspond to the largest possible size $\ell$ of the minimum linear dependency in a system of $k$-sparse linear equations with $m$ equations in $n$ variables. Such $k$-sparse linear equations arise naturally as parity check equations for \emph{low-density parity check} (LDPC) error correcting codes, and the size vs girth trade-offs for $k$-uniform hypergraphs thus correspond to rate vs distance trade-offs for LDPC codes. 

By the equivalence between even covers and linear dependencies, it is clear that every hypergraph with $m \geq n+1$ hyperedges must have an even cover of size at most $n+1$. This is tight as can be seen, for example, by considering a $1$-uniform hypergraph consisting of $n$ singletons. This suggests the following natural question.

\begin{question}
    How does the trade-off between the number of edges $m$ of the $k$-uniform hypergraph $\cal H$ and the maximum possible girth of $\cal H$ look as $m$ increases beyond $n+1$?
\end{question}

For $k$-uniform hypergraphs with $k>2$, the size vs girth trade-offs were first studied by Naor and Verstra\"ete~\cite{NaorV08} through applications to rate vs distance trade-offs for LDPC codes mentioned above. They showed that every $\cal H$ with $m \geq n^{k/2} \log^{O(1)}(n)$ hyperedges on $n$ vertices must contain an even cover of length $O(\log n)$. The $\log^{O(1)}(n)$ factor was further improved to a $O(\log \log n)$-factor in a subsequent work of Feige~\cite{Feige08}. For $k=2$, this recovers a coarse version of the irregular Moore bound. For $k>2$, however, there is an interesting regime between the two extreme thresholds of $m=n+1$ (with maximum possible girth of $n+1$) and $m \sim n^{k/2} \log^{O(1)}(n)$ (with maximum possible girth of $O(\log n)$). 

In 2008, Feige~\cite{Feige08} formulated a conjecture about this in-between regime that suggests a smooth interpolation between the two extremes noted above. 
\begin{conjecture}
    Fix any $k \geq2$. Then, there exists a sufficiently large $C>0$ such that for sufficiently large $n$ and every $\ell$, every $k$-uniform hypergraph $H$ with $m \geq C n (\frac{n}{\ell})^{k/2-1}$ hyperedges has an even cover of length at most $O(\ell \log_2 n)$.
\end{conjecture}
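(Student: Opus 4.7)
The plan is to build a properly edge-colored auxiliary graph on which a cycle containing an edge of unique color corresponds to a small non-empty even cover of ${\cal H}$, and then apply Lemma~\ref{cycle_unique_color}. Assume first that $k$ is even and set $r=\Theta(\ell)$. Define the \emph{Kikuchi graph} $G$ with vertex set $V(G)=\binom{[n]}{r}$ by joining $r$-subsets $S$ and $S'$ by an edge of color $e$ whenever $S\triangle S'=e$ for some hyperedge $e\in{\cal H}$. Since the $e$-neighbor of any $S$ is uniquely $S\triangle e$ (when it has size $r$), the coloring is automatically proper, and a short count gives
\[
|V(G)|=\binom{n}{r}, \qquad |E(G)|=\tfrac{1}{2}\binom{k}{k/2}\binom{n-k}{r-k/2}\cdot m.
\]
For odd $k$ the same idea works on a bipartite Kikuchi graph with parts $\binom{[n]}{r}$ and $\binom{[n]}{r+1}$, with the same orders of magnitude in the estimates below.

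The key structural observation is a correspondence between closed walks in $G$ and $\mathbb{F}_2$-dependencies in ${\cal H}$. If $S_0,S_1,\dots,S_L=S_0$ is a closed walk and the edge $S_{i-1}S_i$ carries color $e_i$, then $S_i=S_{i-1}\triangle e_i$, so $e_1\triangle\cdots\triangle e_L=\varnothing$, i.e.\ $\sum_i v_{e_i}=0$ over $\mathbb{F}_2$. The subcollection ${\cal F}\subseteq{\cal H}$ of hyperedges appearing an \emph{odd} number of times among $e_1,\dots,e_L$ is therefore an even cover of size at most $L$. If moreover the cycle has an edge of unique color $e^*$, then $e^*\in{\cal F}$, and we obtain a genuine \emph{non-empty} even cover of ${\cal H}$ of size at most $L$.

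Now apply Lemma~\ref{cycle_unique_color} to $G$ with $N=\binom{n}{r}$: provided $|E(G)|\ge 2N\log_2 N$, we obtain a cycle of length at most $2\log_2 N$ containing an edge of unique color. Using the standard estimates $\binom{n-k}{r-k/2}/\binom{n}{r}\asymp (r/n)^{k/2}$ and $\log_2 N\lesssim r\log_2(n/r)$, this sufficient condition reduces to
\[
m \;\gtrsim\; \Big(\tfrac{n}{r}\Big)^{k/2}\, r\log_2 n,
\]
which, with $r=\Theta(\ell)$, is $m\gtrsim n(n/\ell)^{k/2-1}\log n$; the resulting even cover has size $2\log_2 N=O(\ell\log n)$, as desired.

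The main obstacle, and where this plan falls short of the full conjecture, is the extra $\log n$ factor in the hyperedge count --- exactly the slack lost by the path-counting proof of Lemma~\ref{cycle_unique_color}. Closing the gap to reach the sharp threshold $m\ge Cn(n/\ell)^{k/2-1}$ seems to require exploiting more than mere proper coloring: every color class of the Kikuchi graph is a perfect matching of a highly symmetric, near-regular graph. The most promising refinements appear to be (i) a spread or potential-function argument that first passes to a dense almost-regular subgraph on which even a short \emph{rainbow} cycle can be located, or (ii) a sharpened Moore-type bound for properly edge-colored Cayley-like graphs that saves the missing logarithm. Integrating such a refinement with the Kikuchi construction is the decisive step needed to resolve the conjecture in full.
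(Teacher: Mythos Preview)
This statement is Feige's conjecture, which the paper does \emph{not} prove; it is presented as open. What the paper proves is Theorem~\ref{even-cover}, the weaker version carrying an extra $\log_2 n$ factor in the hyperedge count, and only for even $k$. Your Kikuchi-graph construction, the observation that the coloring is proper, the $\mathbb{F}_2$-correspondence between closed walks and even covers (with the unique-color edge guaranteeing non-emptiness), and the invocation of Lemma~\ref{cycle_unique_color} are exactly the paper's proof of that theorem. Your assessment that this falls a $\log n$ factor short of the full conjecture is correct and matches the paper's own discussion: the best known bounds, due to \cite{HsiehKM23,HKMMS}, still carry this logarithmic loss, and the conjecture remains open.

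One small remark: your treatment of odd $k$ via a bipartite Kikuchi graph on $\binom{[n]}{r}\cup\binom{[n]}{r+1}$ is more optimistic than the paper, which presents only the even-$k$ argument here and notes that the odd-uniformity case in \cite{HKMMS} requires additional work (and in fact the simple bipartite variant you sketch does not by itself give the claimed bound without further ideas). But for even $k$ your plan and the paper's proof coincide.
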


\noindent Random hypergraphs witness the quantitative behavior in Feige's conjecture, up to a multiplicative factor of $\log(n)$ in $m$. Indeed, Feige's conjecture was based on the hypothesis that random hypergraphs are approximately extremal for the purpose of avoiding short even covers. The motivation for this conjecture comes from the question of showing the existence of (and/or efficiently finding) polynomial size \emph{refutation witnesses}. These are easily verifiable certificates of unsatisfiability of randomly chosen $k$-SAT formulas parameterized by the number of clauses. Feige's conjecture implies that the result of Feige, Kim and Ofek~\cite{FeigeKO06}, that random $3$-SAT formulas with $m \geq \Omega(n^{1.4})$ clauses admit a polynomial size refutation witness with high probability, will extend to the significantly more general setting of \emph{smoothed} $3$-SAT formulas, in addition to simplifying the construction and arguments based on the second moment method in~\cite{FeigeKO06}.

Until recently, not much was known about Feige's conjecture except for the work of Alon and Feige~\cite{AlonF09} that showed a suboptimal version for the case of $k=3$ and that of Feige and Wagner~\cite{feige2016generalized} that built an approach to the problem of even covers by viewing them as an instance of generalized girth problems about hypergraphs. In 2022, Guruswami, Kothari and Manohar~\cite{GuruswamiKM22} proved Feige's conjecture up to an additional loss of a $\log^{2k}(n)$ multiplicative factor in $m$ via a rather involved spectral argument applied to the so-called Kikuchi graph, which we discuss in more details below. Their argument was tightened to reduce the loss down to a $O(\log n)$ multiplicative factor in $m$ by Hsieh, Kothari and Mohanty in~\cite{HsiehKM23}. 
Using Lemma \ref{cycle_unique_color}, together with Hsieh, Kothari, Mohanty and Munh\'a Correia \cite{HKMMS}, we can give a substantially simpler, purely combinatorial, argument that recovers their result and improves the logarithmic factors for hypergraphs of odd uniformity. This proof is particularly short in the case of even uniformity, so we present it here.

\begin{theorem}
\label{even-cover}
    For all even $k$ and sufficiently large constant $C$, every $k$-uniform $n$-vertex hypergraph with at least $C n \left(n/\ell \right)^{k/2-1} \cdot \log_2 n$ hyperedges contains an even cover of size $O(\ell \log_2 n)$.
\end{theorem}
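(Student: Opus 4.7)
The plan is to associate to $\mathcal{H}$ an auxiliary properly edge-colored graph, the \emph{Kikuchi graph}, and then invoke Lemma~\ref{cycle_unique_color}. Writing $k = 2t$, let $G$ be the graph with vertex set $V(G) = \binom{[n]}{\ell}$ in which, for every hyperedge $E \in \mathcal{H}$ and every $S \in \binom{[n]}{\ell}$ with $|S \cap E| = t$, we insert the edge $\{S,\, S \triangle E\}$ and color it by $E$. This coloring is proper: for a fixed vertex $S$ and a fixed color $E$ the partition $E = (S \cap E) \sqcup (E \setminus S)$ producing a color-$E$ edge at $S$ is determined uniquely, so at most one edge of color $E$ meets $S$.

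Next I would verify the hypothesis of Lemma~\ref{cycle_unique_color}. For each hyperedge $E$ there are $\binom{k}{t}\binom{n-k}{\ell-t}$ choices of $S$ with $|S\cap E|=t$, and since each edge is listed from both endpoints,
\[
|E(G)| \;=\; \frac{m}{2}\binom{k}{t}\binom{n-k}{\ell-t}.
\]
A direct binomial computation gives $\binom{n-k}{\ell-t}/\binom{n}{\ell} = \Theta_{k}\bigl((\ell/n)^{k/2}\bigr)$ for $\ell \geq k$, while $\log_2\binom{n}{\ell} \leq \ell\log_2(en/\ell) = O(\ell\log_2 n)$. Hence, for $C = C(k)$ sufficiently large, the assumption $m \geq Cn(n/\ell)^{k/2-1}\log_2 n$ forces
\[
|E(G)| \;\geq\; 2\,|V(G)|\log_2|V(G)|.
\]
Lemma~\ref{cycle_unique_color} then produces a cycle $S_0, S_1, \dots, S_r = S_0$ in $G$ of length $r \leq 2\log_2|V(G)| = O(\ell\log_2 n)$ together with one edge whose color, call it $E^{(j)}$, is distinct from the colors $E^{(1)}, \dots, E^{(r)}$ of all other edges of the cycle.

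Finally I would extract the even cover. By construction $S_i = S_{i-1} \triangle E^{(i)}$, so using $S_r = S_0$ we get $\sum_{i=1}^{r} v_{E^{(i)}} = 0$ over $\mathbb{F}_2$. Hyperedges occurring an even number of times in $(E^{(1)},\dots,E^{(r)})$ contribute $0$ to this sum, so the subfamily $\mathcal{F} \subseteq \mathcal{H}$ consisting of the hyperedges occurring an odd number of times is itself an even cover. Because $E^{(j)}$ occurs exactly once, $E^{(j)} \in \mathcal{F}$, so $\mathcal{F}$ is non-empty, and evidently $|\mathcal{F}| \leq r = O(\ell\log_2 n)$, as required. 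The only delicate point is quantitative: the single $\log_2 n$ factor in the hypothesis on $m$ is exactly what remains after canceling the $\log_2|V(G)| = \Theta(\ell\log_2(n/\ell))$ factor demanded by Lemma~\ref{cycle_unique_color} against the $\ell$ produced by the binomial ratio, so the computation of $|E(G)|/|V(G)|$ must be carried out with care in the $k$-dependent constants; beyond this, no further idea is needed.
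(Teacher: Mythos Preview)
Your proposal is correct and follows essentially the same route as the paper: build the Kikuchi graph on $\binom{[n]}{\ell}$, verify the edge count meets the $2N\log_2 N$ threshold, apply Lemma~\ref{cycle_unique_color}, and read off an even cover from the resulting cycle using the uniquely colored edge to guarantee nontriviality. Your write-up is in fact slightly more explicit than the paper's in two places (the exact edge count $\tfrac{m}{2}\binom{k}{t}\binom{n-k}{\ell-t}$ and the odd-multiplicity extraction of $\mathcal{F}$), but there is no genuine difference in approach.
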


\begin{proof}
Let ${\cal H}$ be a hypergraph satisfying the assertion of the theorem. We define an edge-colored graph $G$, called the Kikuchi graph, as follows.
The vertex set of $G$ consists of all $\ell$-element subsets of the vertex set $V({\cal H})$. For two sets $S,T$ of size $\ell$
define an edge $S \xleftrightarrow{} T$  if there exists an edge $E$ of the hypergraph ${\cal H}$ such that characteristic vectors $v_S, v_T$ satisfy $v_S+v_T = v_E$, i.e., 
$|S \cap E| = |T \cap E| = k/2$ and $|S \cap T|=\ell-k/2$. Note that $G$ is a subgraph of the corresponding Caley sum graph. Color the edge $S \xleftrightarrow{} T$ in $G$ with color $E$. An important observation is that this edge coloring is proper. Indeed, given an $\ell$-set $S$ and a color $E \in E({\cal H})$ there exist at most one $\ell$-set $T$ satisfying $v_S+v_T = v_E$, since this implies $v_T=v_S+v_E$.

\noindent 
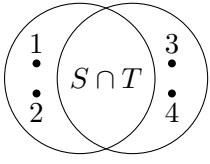
\begin{figure}[ht]
    \centering
    \begin{minipage}{0.3\textwidth} % Adjust the width as needed
        \centering
        \begin{tikzpicture}[scale=0.5]
            % Draw the first circle
            \draw (0,0) circle [radius=2cm];

            % Draw the second circle, intersecting with the first circle such that the intersection is about 3/4 of each circle's area
            \draw (1.4,0) circle [radius=2cm];

            % Points 1 and 2 in the first circle but outside the intersection, aligned vertically
            \fill (-1.15,0.4) circle [radius=3pt] node[above] {1};
            \fill (-1.15,-0.4) circle [radius=3pt] node[below] {2};

            % Points 3 and 4 in the second circle but outside the intersection, aligned vertically
            \fill (2.45,0.4) circle [radius=3pt] node[above] {3};
            \fill (2.45,-0.4) circle [radius=3pt] node[below] {4};

            % Add S ∩ T in the middle of the common area
            \node at (0.7,0) {$S \cap T$};
        \end{tikzpicture}
    \end{minipage}%
    \hfill
    \begin{minipage}{0.7\textwidth} % Adjust the width as needed
        \captionof{figure}{
            An illustration of the Kikuchi graph defined above with an edge $E = \{1,2,3,4\}$ of ${\cal H}$ and an edge $S \xleftrightarrow{} T$ of $G$ colored with $E$.
        }
        \label{fig:gadget}
    \end{minipage}
\end{figure}

Next we apply Lemma \ref{cycle_unique_color} to show that $G$ contains a cycle of length $O(\ell \log n)$ with a unique color.
Note that $G$ has $N = {n \choose \ell}$ vertices and each edge $E \in E(\mathcal{H})$ creates at least $ \binom{n-k}{\ell-k/2}$ edges of $G$, since we need to choose the intersection $S \cap T$ which is disjoint from $E$ and combine it with an arbitrary partition of $E$ into two equal parts. Therefore, the number of edges in $G$ is 
$$e(G) \geq e({\cal H}) \binom{n-k}{\ell-k/2}\geq C n \left(\frac{n}{\ell} \right)^{k/2-1} \log n \cdot \binom{n-k}{\ell-k/2}\geq 2N \log_2 N.$$
The last inequality follows from $N = \binom{n}{\ell}$ and $\log_2 N \leq \log_2 n^\ell=\ell \log_2 n$ by a simple but tedious computation. Since $G$ is properly-colored, Lemma \ref{cycle_unique_color} implies that it contains the desired cycle of length at most $r \leq 2\log_2 N \leq 2\ell \log_2 n$.

To finish the proof, take the cycle $S_1,S_2, \ldots, S_r$ in $G$ given above and consider the collection of edges $\{E_i\}$ in ${\cal H}$ which appear as colors in this cycle.
By the definition of $G$, we have $v_{S_i}+v_{S_{i+1}}=v_{E_i}$. Thus $\sum_i v_{E_i}
=\sum_i\big(v_{S_i}+v_{S_{i+1}}\big)=0$. Since some edge $E_i$ appears in this sum uniquely (i.e., does not cancel out) we got a nontrivial even cover of size at most $r = O(\ell \log n)$, as desired.
\end{proof}

The bound in Lemma \ref{cycle_unique_color} on the number of edges which guarantees a cycle with a unique color also has an application to another well known problem related to locally decodable codes.
A binary error correcting code is a map ${\cal C}: \{0,1\}^m \rightarrow \{0,1\}^n$, where we view the input as an $m$-bit ``message'' and the output as an $n$-bit codeword. By a slight abuse of notation, we use ${\cal C}$ to also denote the set of all codewords: $\{y \mid \exists x \in \{0,1\}^m, {\cal C}(x) = y\}$. We say that ${\cal C}$ is linear if, when viewing the input and output as $\mathbb{F}_2^m$ and $\mathbb{F}_2^n$, respectively, ${\cal C}$ is an $\mathbb{F}_2$-linear map. A code ${\cal C}$ is called $(q,\delta)$-locally decodable (LDC) if, in addition, it admits a randomized \emph{local decoding} algorithm. Such a local decoding algorithm takes as input any target message bit $i$ for $1 \leq i \leq m$ and a corrupted codeword $y$ such that $\dist(y,{\cal C}(x)) \leq \delta n$ for some $x \in \{0,1\}^m$, where $\dist$ counts the number of coordinates in which $y$ and $C(x)$ differ. The goal of the algorithm is to access at most $q$ locations in $y$ and output $x_i$ correctly with high probability over the choice of the $q$ locations. In other words, the local decoder can decode any bit of the message by reading at most $q$ locations of the received corrupted codeword. 

Locally decodable codes are intensely investigated in computer science (see the survey~\cite{Yekhanin12} for background and applications) with applications to probabilistically checkable proofs, private information retrieval~\cite{Yekhanin10}, and worst-case to average-case reductions in computational complexity theory. They also have deep connections with additive combinatorics and incidence geometry~\cite{Dvir12}. We are typically concerned with codes that are locally decodable with very few queries, such as $q=2$ or $3$, and the fundamental question is the smallest possible $n = n(m)$ such that there is a $(q,\delta)$-binary LDC ${\cal C}:\{0,1\}^m \rightarrow \{0,1\}^n$. Classical results have essentially completely resolved the case of $q=2$ and we know that codewords must have blocklength of at least $n \geq 2^{\Omega(m)}$~\cite{GoldreichKST06,KerendisW04}, and a matching upper bound is provided by the Hadamard codes (when $\delta$ is constant). The case of $q=3$ already presents wide gaps, where, until recently, the best known lower bound~\cite{GoldreichKST06,KerendisW04} was $n \geq \tilde{O}(m^2)$, while the best known construction~\cite{Yekhanin08,Efremenko09} gives a $3$-query binary linear code with $n \leq \exp (\exp (O(\sqrt{\log m \log \log m})))$. Recently, Alrabiah et.~al.~\cite{AlrabiahGKM23} improved the quadratic bound above to obtain a lower bound of $n \geq m^3/ \mathrm{polylog} m$ for 3-query binary locally decodable codes. As in the case of the results on even covers (mentioned above), their proof involves spectral arguments on signed adjacency matrices of Kikuchi graphs based on matrix concentration inequalities. In \cite{HKMMS}, we use the variant of Lemma \ref{cycle_unique_color} to give a simple and purely combinatorial proof of the following result, which recovers their cubic lower bound and has slightly better logarithmic dependence.

\begin{theorem} \label{thm:LDC-non-even-cover}
Let $C:\mathbb{F}_2^m \rightarrow \mathbb{F}_2^n$ be a linear map that gives a $3$-query locally decodable code with distance $\delta>0$. Then, $m \leq K n^{1/3} \log n$ for $K = 10^7/\delta^2$. 
\end{theorem}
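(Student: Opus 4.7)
The plan is to argue by contradiction. Suppose $m > Kn^{1/3}\log_2 n$ with $K = 10^7/\delta^2$. I will build a properly edge-colored Kikuchi-type graph $K$ whose edge count exceeds $2|V(K)|\log_2|V(K)|$, invoke a variant of Lemma~\ref{cycle_unique_color} to produce a cycle with an edge of unique color, and then translate that cycle into a forbidden linear dependence among the basis vectors of $\mathbb{F}_2^m$.

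I start with the standard Katz--Trevisan normal-form reduction. Up to the loss of constant factors, it produces for every message coordinate $i\in[m]$ a matching $M_i\subseteq\binom{[n]}{3}$ of pairwise-disjoint ``decoding triples,'' of size $|M_i|\ge c\,\delta^2 n$ for an absolute constant $c$, such that the identity $g_a+g_b+g_c=e_i$ holds in $\mathbb{F}_2^m$ for every $\{a,b,c\}\in M_i$. Here $g_a$ is the $a$-th row of the generator matrix of $C$ and $e_i$ is the $i$-th standard basis vector of $\mathbb{F}_2^m$.

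Next, with the parameter $\ell=\lfloor n^{2/3}\rfloor$, I build a bipartite Kikuchi graph $K$ on vertex classes $\binom{[n]}{\ell}$ and $\binom{[n]}{\ell+1}$; the odd uniformity of triples forces a bipartite construction. For every $(i,T)$ with $T=\{a,b,c\}\in M_i$ and every $S\in\binom{[n]}{\ell}$ with $|S\cap T|=1$, add the edge $S\leftrightarrow S\triangle T$, colored by the pair $(i,p)$ where $p$ is the unique element of $S\cap T$. Because $M_i$ is a matching in $[n]$, each element $p$ lies in at most one triple of $M_i$, so $(i,p)$ determines $T$ and hence the unique edge leaving $S$ of that color: the coloring is proper. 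Counting, $|E(K)|\ge 3\binom{n-3}{\ell-1}\sum_i|M_i|\gtrsim \delta^2 m n\binom{n-3}{\ell-1}$, while $N:=|V(K)|\asymp\binom{n}{\ell+1}$ with $\log_2 N=O(\ell\log n)$. The elementary estimate $\binom{n-3}{\ell-1}/\binom{n}{\ell+1}\asymp(\ell/n)^2$ then shows that $|E(K)|\ge 2N\log_2 N$ holds as soon as $\delta^2 m\ell\gtrsim n\log n$, which is guaranteed by the standing assumption $m\ge Kn^{1/3}\log n$ and $\ell\asymp n^{2/3}$. Invoking Lemma~\ref{cycle_unique_color} (in the variant developed in HKMMS to handle product-labelled colorings), I extract a cycle in $K$ of length $O(\ell\log n)$ carrying an edge of unique color $(i^*,p^*)$.

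To finish, I telescope symmetric differences along the cycle: $\sum_k v_{T_k}=0$ in $\mathbb{F}_2^n$, so every codeword coordinate appears an even number of times among the triples traversed. Summing the LDC identities $\sum_{a\in T_k}g_a=e_{i_k}$ over the cycle cancels the $g_a$-side completely, leaving the equation $\sum_k e_{i_k}=0$ in $\mathbb{F}_2^m$. The main obstacle, and the place where the HKMMS refinement of Lemma~\ref{cycle_unique_color} is crucial, is upgrading the uniqueness of the \emph{full} color $(i^*,p^*)$ to odd multiplicity of $i^*$ alone in the multiset $\{i_k\}$: a priori, several pairs $(i^*,p')$ with $p'\neq p^*$ might also appear in the cycle and make the $i^*$-coefficient of $\sum_k e_{i_k}$ even. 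The HKMMS argument resolves this by tracking the interaction between colors that share an $i$-projection inside short cycles; once odd multiplicity of $i^*$ is secured, the identity $\sum_k e_{i_k}=0$ has a nonzero $i^*$-coordinate, contradicting linear independence and completing the proof.
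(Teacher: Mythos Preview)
The paper does not include a proof of this theorem in the text; it states the result and attributes the proof to \cite{HKMMS}, noting only that it uses ``a variant of Lemma~\ref{cycle_unique_color}.'' So there is no in-paper proof to compare against line by line. Your outline does track the expected shape of that argument: the Katz--Trevisan normal form, a bipartite Kikuchi graph for the odd-uniformity decoding triples, an edge count forcing the hypothesis of Lemma~\ref{cycle_unique_color}, and the conversion of a short cycle into a linear relation among the $e_i$.

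The genuine gap is precisely the one you yourself flag. Coloring by $(i,p)$ makes the coloring proper, but a cycle with a unique color $(i^*,p^*)$ does not force $i^*$ to appear an odd number of times among the edge labels: other pairs $(i^*,p')$ with $p'\neq p^*$ may well occur along the cycle, and it is odd parity of $i^*$, not uniqueness of $(i^*,p^*)$, that would make $\sum_k e_{i_k}$ nonzero. You then defer this step to ``the HKMMS refinement,'' but that refinement is exactly the new content beyond the even-uniformity argument given for Theorem~\ref{even-cover}; without it your proof does not close. As written, your final paragraph only establishes $\sum_k e_{i_k}=0$ together with the fact that one particular $(i^*,p^*)$ is singled out, which is no contradiction. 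To make the proof self-contained you must either state and prove the actual strengthening of Lemma~\ref{cycle_unique_color} that delivers a cycle in which some message index $i$ (not merely some full color) has odd multiplicity, or redesign the coloring so that colors and message indices coincide while remaining proper --- and the latter cannot be done naively, since a single matching $M_i$ contributes up to $\ell$ edges at a typical $\ell$-set.
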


\subsection{Rainbow cycles}
As we already mentioned in the introduction the most natural edge coloring constraint is to ask for a subgraph to be rainbow. In this subsection we discuss a problem of finding rainbow cycles in edge-colored graphs. The study of this question was initiated by Keevash, Mubayi, Sudakov and Verstra\"ete \cite{KMSV}, who asked how many edges one can have in a properly edge-colored $n$-vertex graph without containing a rainbow cycle. The example of the Caley sum graph of $\mathbb{F}_2^k$ with respect to the standard basis, which we described earlier, gives a lower bound for this problem of $\Omega(n\log n)$ and it was conjectured in \cite{KMSV} that this is best possible.

\begin{conjecture}
\label{rainbow}
    There exists a constant $C$ such that every properly colored $n$ vertex graph with at least $Cn \log n$ edges contains a rainbow cycle.
\end{conjecture}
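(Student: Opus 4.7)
The plan is to adapt the rainbow-path-counting strategy of Lemma \ref{cycle_unique_color}. First, by iteratively deleting vertices of degree less than $\tfrac{C}{2}\log_2 n$, I may assume the surviving subgraph $G$ has minimum degree $r \geq \tfrac{C}{2}\log_2 n$ while still containing $\Omega(n \log n)$ edges. Set $\ell = \log_2 n$. The same greedy construction as before produces at least $n\prod_{j=0}^{\ell-1}(r-j) \geq n \ell^\ell$ rainbow walks of length $\ell$, and any self-intersecting rainbow walk already closes up into a rainbow cycle, so all these walks can be assumed to be genuine rainbow paths.

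The next step is to extract, under the assumption that $G$ has no rainbow cycle, a matching upper bound on the number of rainbow paths of length $\ell$. The structural input one wants to exploit is the following: if $P$ and $P'$ are internally vertex-disjoint rainbow $x$-$y$ paths whose color sets are disjoint, then $P \cup P'$ is a rainbow cycle. Consequently, the absence of rainbow cycles forces any two rainbow $x$-$y$ paths to share either an internal vertex or a color. The aim is to turn this into a bound of the form: for every pair $x,y$, the number of rainbow paths of length $\ell$ between them is at most some $f(n) \ll \ell^\ell/n$, which, summed over the $n^2$ pairs, contradicts the greedy lower bound. A natural implementation is a sunflower-style extraction inside the multiset of rainbow $x$-$y$ paths: pick a maximal internally disjoint subfamily, argue that every remaining path must share a color with some path in the subfamily (else one obtains a rainbow cycle), and iterate to bound the total count.

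The main obstacle, and the reason this remains a conjecture, lies exactly in the last step. In Lemma \ref{cycle_unique_color} the ``unique color'' obstruction was strong enough to force \emph{all} rainbow $x$-$y$ paths to use the same color set, collapsing the count to $\ell!$. Here, the rainbow obstruction is much weaker: it permits highly diverse color sets provided the paths overlap in delicate ways, and internal vertex intersections can interact with color intersections in patterns that do not immediately yield a rainbow cycle under naive concatenation. A likely refinement is to first pass to a subgraph of $G$ in which very short cycles are suppressed, so that two long rainbow paths between the same endpoints can be surgered into a nearly internally disjoint pair without losing too many colors. Even with such refinements, I expect this purely combinatorial route to yield only $n \cdot \mathrm{polylog}(n)$ as the sufficient edge threshold rather than the optimal $Cn\log n$, matching the current state of the art. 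Closing this logarithmic gap, without a genuinely new structural idea that converts the local statement ``disjoint rainbow paths force a rainbow cycle'' into a tight global bound, is precisely the difficulty that keeps Conjecture \ref{rainbow} open.
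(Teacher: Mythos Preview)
The statement you are addressing is a \emph{conjecture}, and the paper explicitly says so: immediately after Theorem~\ref{thm:almost rainbow cycle} the text reads ``Conjecture~\ref{rainbow} is still open,'' and the best result quoted is Theorem~\ref{best-rainbow}, which needs average degree $C\log n\log\log n$ rather than $C\log n$. There is therefore no ``paper's own proof'' to compare your proposal against.

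To your credit, you recognise this yourself in the final paragraph, where you explain why the path-counting argument that works for Lemma~\ref{cycle_unique_color} stalls here and why you only expect an $n\cdot\mathrm{polylog}(n)$ threshold from this route. That assessment is accurate and is consistent with the history the paper sketches: the homomorphism-counting approach of Janzer gave $O(n(\log n)^4)$, later refinements (including the almost-rainbow cycle of Theorem~\ref{thm:almost rainbow cycle}) brought this down, and the current record of Alon, Buci\'c, Sauermann, Zakharov and Zamir still carries an extra $\log\log n$ factor. Your diagnosis of the obstacle---that in the rainbow setting two $x$--$y$ paths need only share a single color or internal vertex rather than the entire color set, so the clean $\ell!$ bound collapses---is exactly the point at which the naive extension of Lemma~\ref{cycle_unique_color} breaks down.

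In short: your write-up is not a proof and does not pretend to be one; it is a correct informal explanation of why the natural adaptation of the unique-color argument fails to settle the conjecture, and it matches the paper's own account of the problem's status.
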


The first nontrivial upper bound for the rainbow cycle problem was obtained by Das, Lee and Sudakov \cite{DLS13}, who showed that, for sufficiently large $n$, having $n^{1+o(1)}$ edges is enough to guarantee such a cycle. This paper also introduced one of the two main approaches to the rainbow cycle problem, in which one passes to an expander subgraph to find in it a rainbow cycle. Expanders are graphs with strong connectivity properties, and they have many applications across mathematics and computer science (see \cite{HLW}). Another important approach, based on the homomorphism counts, was pioneered by Janzer in \cite{Jan20}, who proved the first polylogarithmic bound of $O(n(\log n)^4)$ for this problem.
Keevash, Mubayi, Sudakov and Verstra\"ete \cite{KMSV} also proved that if $G$ is a properly edge-colored $n$-vertex graph with at least $n\log_2(n+3)$ edges, then for some $k$ it contains a cycle of length $k$ which has more than $k/2$ different colors. Because of the Cayley sum construction, described in the beginning of this section, this is tight up to a constant factor. Using a novel variant of the homomorphism count technique Janzer and Sudakov \cite{JS} strengthened this substantially by finding a cycle which is almost rainbow.

\begin{theorem} \label{thm:almost rainbow cycle}
    If $n$ is sufficiently large, $0<\eps<1/2$ and $G$ is a properly edge-colored $n$-vertex graph with at least $\frac{4}{\eps}n\log n$ edges, then for some $k$ it contains a cycle of length $k$ with more than $(1-\eps)k$ different colors.
\end{theorem}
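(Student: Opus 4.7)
The plan is to adapt the walk-counting strategy of Lemma~\ref{cycle_unique_color} to produce cycles with a $(1-\eps)$-fraction of distinct colors, rather than fully rainbow cycles. First, by iteratively deleting vertices of degree less than $(2/\eps)\log_2 n$, I may assume $G$ has minimum degree $d \geq (2/\eps)\log_2 n$, at the cost of only a constant factor in the edge count. Set $\ell = \log_2 n$, so that $d \geq 2\ell/\eps$.

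For the lower bound, I count length-$\ell$ walks that I call \emph{almost rainbow}, i.e.\ walks using more than $(1-\eps/4)\ell$ distinct colors. At step $i$ of any walk, the proper edge-coloring forces at most $i$ of the $\geq d$ edges at the current vertex to carry a color already appearing on the walk; hence a uniformly random walk of length $\ell$ has at most $O(\ell^2/d) \leq \eps \ell/8$ expected repeated-color steps. By Markov's inequality, at least half of the $\geq n d^\ell$ walks are almost rainbow.

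For the upper bound, suppose for contradiction that every cycle of length $k \leq 2\ell$ has at most $(1-\eps)k$ distinct colors. I bound the number of almost-rainbow walks per fixed endpoint pair $(x,y)$. The key observation is that if two almost-rainbow walks $W_1,W_2$ from $x$ to $y$ satisfy $|C(W_1) \cap C(W_2)| < \eps\ell/2$, then their concatenation $W_1 \cdot W_2^{-1}$ is a closed walk of length $2\ell$ using more than $(2-\eps)\ell$ distinct colors. Decomposing this closed walk into edge-disjoint cycles of lengths $k_1,\ldots,k_s$ with $\sum k_i = 2\ell$, and letting $d_i$ be the number of distinct colors in the $i$-th cycle, a short calculation shows $\sum (k_i - d_i) \leq 2\ell - |C(W_1)\cup C(W_2)| < \eps \ell$; hence the weighted average of $(k_i-d_i)/k_i$ is $< \eps/2$, so some cycle achieves $d_i/k_i > 1-\eps/2 > 1-\eps$, contradicting the hypothesis. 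Therefore every pair of almost-rainbow walks between $x$ and $y$ must share at least $\eps\ell/2$ colors, and combining this with a Janzer-style counting argument bounds the number of such walks per pair; summing over $n^2$ pairs contradicts the $\Omega(nd^\ell)$ lower bound.

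The main technical obstacle is turning the pairwise color-intersection condition into a usable enumeration bound for the walks between a fixed pair of vertices. Because different pairs of walks may share different color subsets, a direct analogue of the $\ell!$ bound in Lemma~\ref{cycle_unique_color} is not available; one needs the novel variant of the homomorphism-count technique from Janzer--Sudakov to overcome this, typically by passing to an auxiliary weighted graph that encodes the color overlaps among many walks simultaneously and exploiting its expansion properties.
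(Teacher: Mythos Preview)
The paper is a survey and does not prove Theorem~\ref{thm:almost rainbow cycle}; it is quoted from Janzer--Sudakov~\cite{JS}. So there is no in-paper argument to match against, and the question is whether your outline is a viable route to the result.

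Your framework is the right one, and your ``key observation'' is correct: if two almost-rainbow $x$--$y$ walks have colour-intersection below $\eps\ell/2$, then the concatenated closed walk has more than $(2-\eps)\ell$ distinct colours, and decomposing its edge-multiset into cycles and averaging yields a cycle with $d_i/k_i>1-\eps/2$. (You should note that the decomposition may produce length-$2$ ``digons'' coming from a doubly traversed edge; these have $d_i/k_i=1/2<1-\eps$ since $\eps<1/2$, so the cycle you select has length $\geq 3$ and is a genuine cycle of $G$.)

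The genuine gap is the step you flag yourself, and it is more serious than a missing citation. The pairwise condition ``any two almost-rainbow $x$--$y$ walks share at least $\eps\ell/2$ colours'' does \emph{not} yield a usable bound on the number of such walks. Even in the strongest case where every walk contains a fixed $t$-set $S$ of colours with $t=\eps\ell/2$, the remaining $\ell-t$ steps are essentially free: the family of admissible colour sets can have size on the order of $\binom{C-t}{\ell-t}$ where $C$ is the total number of colours, and each colour set yields up to $\ell!$ walks (since in a proper colouring a walk is determined by its start and its colour sequence). A short calculation shows that for the comparison with $d^{\ell}/n$ to succeed one would need $(d/\ell)^{\eps\ell/2}>n$, i.e.\ roughly $(\eps/2)\log_2(2/\eps)>1$, which fails for small $\eps$. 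So there is no analogue of the ``same colour set $\Rightarrow$ at most $\ell!$ paths'' step of Lemma~\ref{cycle_unique_color}, and your reduction to a per-pair enumeration cannot close.

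The Janzer--Sudakov argument does not attempt to bound walks per endpoint pair. Instead, one lower-bounds the number of ordered \emph{pairs} of almost-rainbow walks with common endpoints by convexity (at least $(\tfrac12 nd^{\ell})^2/n^2$), and then directly upper-bounds the number of \emph{bad} pairs --- those whose concatenation fails to contain a good cycle, equivalently those with colour-overlap $\geq \eps\ell/2$ --- by summing over $W_1$ and controlling how many $W_2$ can reuse that many colours of $W_1$. Executing this bound on bad pairs is precisely the ``novel variant of the homomorphism-count technique'' the paper alludes to, and your proposal does not supply it.
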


Conjecture \ref{rainbow} is still open. After several further improvements in \cite{Tom22, JS, KLLT}, the current best bound was obtained in an impressive paper of 
Alon, Buci\'c, Sauermann, Zakharov and Zamir \cite{alon2023essentially}, who came very close to proving the conjecture.

\begin{theorem}
\label{best-rainbow}
    There exists a constant $C>0$ such that every properly edge-colored $n$-vertex graph with
average degree at least $C \log n \log \log n$ contains a rainbow cycle.
\end{theorem}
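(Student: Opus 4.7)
The plan is to combine the almost-rainbow cycle theorem (Theorem \ref{thm:almost rainbow cycle}) with a rerouting argument inside a sublinear expander. The intuition is that if one already has a cycle in which only a $(1/\log\log n)$-fraction of edges carry a repeated color, then each ``bad'' edge can be surgically replaced by a short rainbow detour, provided the ambient graph has strong expansion.

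First, I would perform the standard degree-reduction: starting from a graph of average degree at least $C\log n\log\log n$, iteratively remove vertices whose degree is less than half the current average degree. Since fewer than half of the edges can be removed this way, one obtains a subgraph $G'$ of minimum degree $\Omega(\log n\log\log n)$. Next I would pass to a Komlós--Szemerédi sublinear expander subgraph $H\subseteq G'$: any graph has such a subgraph with average degree reduced only by a constant factor, and in $H$ every two sets of size at least $s$ can be joined by a path of length $O(\log^{2}(n/s))$, and in fact by a path that avoids any prescribed small set of forbidden vertices or colors (losing a polylogarithmic factor in the length).

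Second, I would apply Theorem \ref{thm:almost rainbow cycle} to $H$ with $\eps=1/(100\log\log n)$. Since the minimum degree of $H$ exceeds $\frac{4}{\eps}\log n$ for a sufficiently large constant $C$, the theorem produces a cycle $C_0$ of some length $k$ with at least $(1-\eps)k$ distinct colors. The set $B$ of edges of $C_0$ whose color is repeated on $C_0$ satisfies $|B|\le 2\eps k$, so $C_0$ decomposes into at most $|B|$ maximal rainbow arcs.

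Third, I would perform the rainbow surgery. Process the bad arcs one at a time. For each bad arc with endpoints $u,v$, use the expansion of $H$ to find a short rainbow $u$--$v$ path of length $\poly\log n$ that avoids (a) the colors used on the rainbow arcs already retained, (b) the colors of previous detours, and (c) the interior vertices of previous arcs and detours. Because the total forbidden color set has size at most $k+|B|\cdot\poly\log n$ and each vertex of $H$ has $\Omega(\log n\log\log n)$ incident edges of distinct colors, a BFS-type exploration that discards forbidden colors/vertices at each step still grows by a multiplicative factor $\Omega(\log\log n)$ per level, reaching a target set of size $\Omega(n)$ in $O(\log n/\log\log n)$ steps. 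Replacing each bad arc by such a detour yields a closed walk whose edges are all distinctly colored; standard cycle-extraction from a rainbow closed walk then produces a rainbow cycle.

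The main obstacle is the bookkeeping in the surgery step: each detour introduces $\poly\log n$ new forbidden colors, and one must guarantee that the detours remain available even after many arcs have been processed. This is precisely where the extra $\log\log n$ factor in the degree hypothesis enters — it provides the slack needed so that BFS exploration against a cumulatively growing forbidden set still expands, keeping every required detour short and the total color budget bounded by $o(k)$. Making this trade-off tight, and in particular avoiding an extra factor of $\log n$ in either direction, is the delicate quantitative heart of the argument.
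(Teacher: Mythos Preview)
The survey does not prove Theorem \ref{best-rainbow}; it is quoted as a result of Alon, Buci\'c, Sauermann, Zakharov and Zamir \cite{alon2023essentially} and then used as a black box, so there is no proof in the paper to compare against. Judged on its own terms, your proposal has a genuine gap in the surgery step.

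Every detour you build must avoid all colors on the rainbow arcs of $C_0$ that you retain, and that forbidden set has size essentially $k$, the length of the cycle produced by Theorem~\ref{thm:almost rainbow cycle}. But that theorem, as stated, gives \emph{no} bound on $k$; the almost-rainbow cycle could have length of order $n$. Once $k$ exceeds the minimum degree $\Theta(\log n\log\log n)$ of $H$, a vertex may have every incident color forbidden, and your BFS exploration has nothing to explore. Your final paragraph quietly assumes $k+|B|\cdot\poly\log n$ is small compared to the degree, but nothing you have done forces $k=O(\log n\log\log n)$. Even if one could arrange $k$ to be moderate, the arithmetic does not close: a growth factor of $\Omega(\log\log n)$ per level needs $\log n/\log\log\log n$ levels to reach $\Omega(n)$, not $\log n/\log\log n$; and if you instead use the full degree as the growth factor, the $|B|\approx k/(50\log\log n)$ detours of length $O(\log n/\log\log n)$ contribute $\Theta(k\log n/(\log\log n)^2)$ new forbidden colors, which stays below the degree only when $k=O((\log\log n)^3)$. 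The Koml\'os--Szemer\'edi sublinear expansion you invoke separately only gives $(1+1/\mathrm{polylog}\,n)$-growth for large sets, yielding paths of length $\mathrm{polylog}\,n$ rather than $O(\log n/\log\log n)$, so the two expansion regimes you appeal to are not consistent with one another. The argument in \cite{alon2023essentially} does pass to robust expanders, but the mechanism for producing a rainbow cycle inside them is substantially more delicate than patching an almost-rainbow cycle.
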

The rainbow cycle problem has interesting applications to questions in other areas.
Next we discuss one such application to a problem in additive combinatorics.
\begin{definition}
Let $(\Gamma,\cdot)$ be a group with identity element $e$. A subset $S \subseteq \Gamma$ is called dissociated if there is no solution to the equation $g_1^{\eps_1}\dotsm g_m^{\eps_m}=e$ with 
$\eps_i=\pm1$, $m\geq 1$ and distinct $g_1,\ldots, g_m \in S$.
\end{definition}
This definition means that there is no nontrivial relation between elements of $S$ with exponents $\pm1$. Therefore,
the maximal dissociated sets play to some extent a similar role in arbitrary groups as linearly independent sets play in vector spaces. 
From this definition, it is easy to see that a maximal dissociated subset of a set $A \subseteq \Gamma$ generates the set $A$, using only exponents $\pm 1$. In addition, one can show \cite{LY} that for abelian groups all maximal dissociated subsets of any set $A \subseteq \Gamma$ have almost the same size.  It is therefore not surprising that controlling the size of a maximal dissociated subset is relevant for various problems in additive number theory, see \cite{BK, Bour, Chang, KS, San1}. Dissociated sets also play an important role in Harmonic analysis, see e.g.\ \cite{Ko,Ru}. 
The above-mentioned properties lead naturally to a notion of dimension of a finite subset $A$ of a  group $\Gamma$ which was defined by Schoen and Skhredov \cite{SS}.
\begin{definition}
    The additive dimension $\dim A$ of subset $A$ of group $\Gamma$ is the size of the largest dissociated subset of $A$.
\end{definition}
A subset $A \subset \Gamma$ has a small doubling if the cardinality of $A \cdot A=\{aa' : a,a' \in A\}$ is of order $O(|A|)$.
The study of the structure of sets with small doubling in the abelian setting traces back to a celebrated result of Freiman \cite{Fr} from 1964 and has been the subject of extensive study ever since, see e.g.\ \cite{GR, San3} and references therein. There has been a lot of work extending results about sets with small doubling to nonabelian groups, with a foundational paper of Tao \cite{Tao} fueling the work in this direction. One can naturally expect that a set with small doubling should be rather structured and could be
``well approximated'' by a ``subspace/subgroup'' generated by its maximal dissociated subset. The following theorem, proved in \cite{San2} (part $1$) and \cite{alon2023essentially} (part $2$),  supports this intuition. 

\begin{theorem}
\label{additive-dimension}
Let $\Gamma$ be a group, and let $A \subseteq \Gamma$ be a finite subset with $|A \cdot A|\le K|A|$ for some positive integer $K$.
\begin{enumerate}
    \item 
    If $\Gamma$ is abelian, then $\dim A \le O_K(\log |A|)$.
    \item 
    For general group $\Gamma$, $\dim A \le O_K(\log^{1+o(1)} |A|)$.
\end{enumerate}
\end{theorem}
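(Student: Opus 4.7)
The plan is to derive part 2 as an application of Theorem \ref{best-rainbow} to a bipartite ``Cayley-type'' edge-colored graph built from $A$ and $S$. The same argument yields only $O_K(\log^{1+o(1)}|A|)$ in the abelian case, so the sharper bound in part 1 lies outside what rainbow cycles can provide and instead calls for a separate additive-combinatorial argument (carried out by Sanders in~\cite{San2}), which I will not attempt to reproduce.

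Let $S = \{g_1, \ldots, g_d\} \subseteq A$ be a dissociated subset of maximum size. Set $B := A \cdot S$, and note that $|B| \le |A \cdot A| \le K|A|$ because $S \subseteq A$. Let $G$ be the bipartite graph with parts $A$ and $B$ in which, for each pair $(a, s) \in A \times S$, we place a single edge between $a \in A$ and $as \in B$ colored by $s$. Then $n := |V(G)| \le (K+1)|A|$ and $G$ has $|A|\cdot d$ edges. The coloring is proper: at a vertex $a \in A$ the unique color-$s$ edge goes to $as$, and at a vertex $b \in B$ the unique color-$s$ edge, if it exists, goes to $bs^{-1}$.

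The average degree of $G$ is at least $2d/(K+1)$, so once this exceeds the threshold $C \log n \log \log n$ of Theorem \ref{best-rainbow} we obtain a rainbow cycle in $G$. Since $G$ is bipartite, the cycle has even length $2k$, and we may write it as $a_1 b_1 a_2 b_2 \cdots a_k b_k a_1$. Let $s_i \in S$ and $t_i \in S$ be the colors of the edges $a_i b_i$ and $b_i a_{i+1}$ respectively (indices mod $k$). By construction $b_i = a_i s_i = a_{i+1} t_i$, so $a_{i+1} = a_i s_i t_i^{-1}$, and iterating around the cycle yields
\[
s_1 t_1^{-1} s_2 t_2^{-1} \cdots s_k t_k^{-1} = e.
\]
Because the cycle is rainbow, the $2k$ elements $s_1, t_1, \ldots, s_k, t_k$ of $S$ are pairwise distinct, so this is a nontrivial $\pm 1$ word in distinct elements of $S$ equal to the identity, directly contradicting the dissociativity of $S$.

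Consequently $d \le \tfrac{K+1}{2}\, C \log n \log \log n = O_K(\log^{1+o(1)}|A|)$, which is part 2. The combinatorial step is essentially effortless once the bipartite colored graph is set up correctly; the entire difficulty of the theorem has been absorbed into Theorem \ref{best-rainbow}, and the only nuance to check is that no choice of signs or collision can accidentally produce the trivial relation, which the rainbowness takes care of. The residual $\log \log n$ factor is precisely the gap between Theorem \ref{best-rainbow} and Conjecture \ref{rainbow}, so any progress on the Rainbow Cycle Conjecture would immediately sharpen the general-group bound to $O_K(\log |A|)$, matching part 1.
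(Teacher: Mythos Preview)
Your argument for part 2 is correct and essentially identical to the paper's: the same bipartite graph on $A \sqcup A\cdot S$ with edges $a\sim as$ colored $s$, the same appeal to Theorem~\ref{best-rainbow}, and the same derivation of a $\pm1$ relation from a rainbow cycle contradicting dissociativity. The only minor divergence is that the paper remarks part~1 can be deduced from Lemma~\ref{cycle_unique_color} (the unique-color cycle lemma, which already gives the sharp $O(n\log n)$ threshold) rather than requiring Sanders' separate argument, but since neither you nor the paper spell this out, there is nothing to correct.
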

The proof of the first statement can be deduced from Lemma \ref{cycle_unique_color} and we omit it here. We illustrate the application of Theorem \ref{best-rainbow} by proving
the second statement.
\begin{proof}
    
Let $A \subseteq \Gamma$ with $|A \cdot A|\le K|A|$ and let $S$ be a dissociated subset of $A$ of maximum size. Then $|S|=\dim A$.
Define an auxiliary bipartite graph $H$ with disjoint parts $A$ and $B=A \cdot S=\{as : a \in A, s \in S\}$
such that $(a,b)$ is an edge of $H$ if $b=as$ for some $s \in S$. Also, color such an edge of $H$ by color $s$.

By definition, this edge coloring is proper and the number of vertices of $H$ satisfies
$|V(H)| \leq|A|+|A \cdot A| \leq O_K(|A|)$. Moreover, the number of edges in $H$ is $|A||S|$. Therefore, if $|S|>C_K \log |A| \log \log |A|$, for a sufficiently large constant $C_K$ depending on $K$, we can use Theorem~\ref{best-rainbow} to find a rainbow cycle in $G$. This cycle has an even length, since $H$ is bipartite.
A rainbow cycle with distinct colors $s_1,s_2,\ldots,s_{2\ell}$ appearing in that order implies $$s_1s_2^{-1}s_3s_4^{-1}\cdots s_{2\ell-1}s^{-1}_{2\ell}=e.$$ 
This contradicts that $S$ is dissociated and shows that $|S| \le C_K \log |A| \log \log |A|= O(\log^{1+o(1)} |A|)$.     
\end{proof}

The rainbow cycle problem has several additional interesting applications. For example, Theorem~\ref{best-rainbow} was recently used by Alrabiah and Guruswami \cite{AG} to obtain nearly tight bounds on the dimension of 3-query locally correctable binary linear codes. These codes are closely related to locally decodable codes, which we already mentioned above. For more definitions and history of the problem, we refer the interested reader to \cite{AG}.

\section{Matchings and transversals in Latin squares}

Recall that a Latin square of order $n$ is an $n\times n$ array filled with $n$ symbols so that every symbol appears only once in each row and in each column. A partial transversal of size $t$ is a collection of $t$ cells of the Latin square which do not share the same row, column or symbol. A transversal (also known as a full transversal) is a partial transversal of order $n$. As we already discussed in the Introduction, the study of transversals in Latin squares goes back to the work of Euler on orthogonal Latin squares.  

It is easy to see that for even $n$ there are many Latin squares without full transversals. For example, consider a Latin square $L=(\ell_{i,j})$ of order $2k$ which is an addition table of the cyclic group $\mathbb{Z}_{2k}$ . Suppose it has a transversal $T=\ell_{1,\sigma(1)}, \ldots, \ell_{2k,\sigma(2k)}$, where $\sigma$ is some permutation of $1, \ldots, 2k$. By definition, $\ell_{i,\sigma(i)}$ is a permutation of the numbers $1, \ldots, 2k$ and also $\ell_{i,\sigma(i)}=i+\sigma(i)\bmod 2k$. This implies that sum of the symbols in the transversal $T$ is
$$\sum_{i=1}^{2k} i=\sum_{i=1}^{2k} \ell_{i,\sigma(i)}=\sum_{i=1}^{2k} (i+\sigma(i))=2  \sum_{i=1}^{2k} i\bmod 2k.$$
As $\sum_{i=1}^{2k} i=k\bmod 2k$, this is a contradiction and shows that the addition table of $\mathbb{Z}_{2k}$ has no transversal.
\begin{figure}[h]
\begin{minipage}{0.39\textwidth}
    \centering
\begin{tikzpicture}[scale=.6]
    % Table dimensions
    \def\size{4}

    % Draw the grid
    \foreach \x in {0, 1, 2, 3} {
        \foreach \y in {0, 1, 2, 3} {
            \draw (\x, -\y) rectangle (\x + 1, -\y - 1);
        }
    }

    % Add row and column labels
    \foreach \i in {0, 1, 2, 3} {
        \node at (-0.5, -\i - 0.5) {\(\i\)}; % Row labels
        \node at (\i + 0.5, 0.5) {\(\i\)};  % Column labels
    }

    % Add addition results
    \foreach \x in {0, 1, 2, 3} {
        \foreach \y in {0, 1, 2, 3} {
            \pgfmathtruncatemacro{\result}{mod(\x + \y, 4)}
            \node at (\x + 0.5, -\y - 0.5) {\(\result\)};
        }
    }

    % Highlight the headers
    \draw[thick] (0, 0) -- (\size, 0);
    \draw[thick] (0, 0) -- (0, -\size);
\end{tikzpicture}
\end{minipage}
\begin{minipage}{0.6\textwidth} % Adjust the width as needed

        \captionof{figure}{
           The addition table of the cyclic group $\mathbb{Z}_4$, for which the corresponding Latin square does not contain a full transversal.
        }
        \label{fig:addition table}
    \end{minipage}

\end{figure}

One central problem on Latin squares is to determine which of them have transversals.  This question is very difficult even in the case of multiplication tables of finite groups.
In 1955 Hall and Paige \cite{hall1955complete} conjectured that the multiplication table of a group $G$ has a full transversal exactly if the $2$-Sylow subgroups of $G$ are trivial or non-cyclic. It took 50 years to establish this conjecture. Its proof, which is a combination of works by Wilcox, Evans and Bray, is based on the classification of finite simple groups (see \cite{wilcox2009reduction} and the references therein). Recently two alternative proof of this conjecture, using very different methods,  were found for large groups. 

The first proof was obtained by Eberhard, Manners and Mrazovi\'c \cite{eberhard2020asymptotic} using tools from analytic number theory. 
Remarkably they were also able to obtain a very accurate quantitative bound for the number of transversals in the multiplication tables of groups which satisfy Hall-Paige conjecture. A different proof of this conjecture, this time by combinatorial techniques, was given by M\"uyesser and Pokrovskiy \cite{MP}. They prove a very general theorem about multiplication tables of large groups which gives a solution to several old combinatorial problems in group theory, including the proof of Hall-Page conjecture for large groups.

Another central problem in this area is to determine the size of the largest partial transversal one can find in every Latin square. As we already explained in the introduction, this is equivalent to finding a maximum rainbow matching in a proper edge coloring of the complete bipartite graph $K_{n,n}$ with $n$ colors. The answer to this problem was famously conjectured more than 50 years ago by Ryser, Brualdi and Stein~\cite{ryser1967neuere, stein1975transversals, brualdi1991combinatorial}.
 
\begin{conjecture}
\label{prob-partial-ryser-brualdi-stein}
Every proper edge coloring of the complete graph $K_{n,n}$ with $n$ colors contains a rainbow matching of size $n-1$. Moroever if $n$ is odd it contains a perfect rainbow matching.
\end{conjecture}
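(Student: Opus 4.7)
The plan is to attack this via the modern absorption-plus-nibble framework, with switching arguments used only in the last mile. Note that a proper edge coloring of $K_{n,n}$ by $n$ colors is equivalent to a decomposition of $K_{n,n}$ into $n$ perfect matchings $M_1,\dots,M_n$, and a rainbow matching of size $k$ is a system of distinct representatives selecting one edge from each of $k$ chosen color classes so that the chosen edges form a matching. The goal is to produce such a system of size $n-1$ (or $n$ when $n$ is odd).

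First, I would construct a randomized \emph{absorber}. Reserve a small random set of colors $C_0$ of size roughly $n^{1/2}$ together with a small random vertex pocket $X\cup Y \subseteq V(K_{n,n})$ of comparable size. The absorber must be flexible: for every subset $C'\subseteq C_0$ of colors and every balanced subset of uncovered pocket vertices of appropriate size, there should be a rainbow matching inside the pocket using exactly the colors in $C'$ covering those vertices. Building such a universal gadget is the combinatorial heart of the proof; I would follow the template that has worked for Hamilton cycles and spanning structures (distributed absorbing templates plus a local switching lemma), verifying the universal property via the Lovász local lemma applied to the coloring structure of the random pocket.

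Next, on the remaining colors $[n]\setminus C_0$ and the leftover vertices, I would run a semi-random (Rödl) nibble: repeatedly select a small random matching of unused colors, delete its edges, and iterate. With high probability this produces a near-perfect rainbow matching of size $n-|C_0|-o(n^{1/2})$, with the crucial additional property that the uncovered set is quasirandom — balanced between the two sides of the bipartition and roughly uniformly spread across the reserved colors $C_0$. Feeding the residue into the absorber then yields a rainbow matching of size $n-1$; the parity obstruction illustrated by the addition table of $\mathbb{Z}_{2k}$ must be ruled out in the odd case separately, where one exploits the fact that such obstructions occur only in very special colorings.

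The principal obstacle is precisely the last-mile regime. It is relatively routine to construct a rainbow matching of size $n-o(n)$, and with more care even $n-O(\log n/\log\log n)$ along the lines of Theorem \ref{best-rainbow} and related results. Closing the gap all the way to $n-1$ is hard because once only a handful of colors and vertices remain, augmenting paths and switchings in the residual edge-colored graph may simply fail to exist: the constraints inherited from the proper coloring force the leftover structure to be extremely rigid. The entire argument hinges on whether the absorber can be made robust enough to tolerate an \emph{arbitrary} rigid leftover of the allowed size, while staying small enough that the nibble phase can deliver a matching of the required size. Designing such an absorber, especially one that simultaneously handles the parity issues in the odd case, is where I expect the main difficulty to lie.
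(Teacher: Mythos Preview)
This statement is a \emph{conjecture}, not a theorem: the paper does not prove it and does not claim to. What the paper records is Montgomery's recent Theorem~\ref{main-ryser}, which establishes the first assertion (a rainbow matching of size $n-1$) for all sufficiently large $n$; the paper then explicitly notes that the odd case---finding a perfect rainbow matching---remains open, and that ``further ideas will be needed'' to close that gap. So there is no proof in the paper for you to be compared against, and your proposal should be read as an attack on an open problem.

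Your outline is, at the coarse level, the right shape for the $n-1$ part: Montgomery's argument does combine a semi-random near-perfect rainbow matching with a distributive absorption structure. But your proposal stops exactly where the real difficulty begins. You write that the absorber must be ``flexible'' and ``universal'' and that building it ``is the combinatorial heart of the proof,'' and then do not build it. As the paper explains, Montgomery's absorber is not a generic template-plus-local-lemma gadget; it relies on identifying approximate \emph{algebraic} structure in the coloring (four-edge relations of the form $c_1+c_2-c_3$ being frequently realized) and exploiting it to free pairs of vertices via switchings. Nothing in your sketch supplies this ingredient, and the Lov\'asz local lemma will not manufacture it. Your reference to Theorem~\ref{best-rainbow} is also misplaced: that result concerns rainbow \emph{cycles} in arbitrary properly colored graphs and has no bearing on the $n-O(\log n/\log\log n)$ bound for transversals, which comes from the expander-based argument of~\cite{KPSY}.

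For the odd case your proposal contains nothing. Saying that ``such obstructions occur only in very special colorings'' is not an argument; indeed, the paper stresses that the slack of one edge is used in many places in Montgomery's proof, and removing it for odd $n$ is genuinely open. You have identified the right framework but have not supplied any of the ideas that would turn it into a proof.
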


Most research towards the Ryser-Brualdi-Stein conjecture has focused on proving better and better lower bounds on the size of a maximum rainbow matching, trying to get as close to $n-1$ as possible. Here Koksma~\cite{koksma1969lower} found rainbow matchings of size $2n/3+O(1)$ and Drake~\cite{drake1977maximal} improved this to $3n/4+O(1)$. The first asymptotic proof of the conjecture was obtained by Brouwer, De Vries, and Wieringa~\cite{brouwer1978lower}, and independently by Woolbright~\cite{woolbright1978n}, who found rainbow matchings of size $n-\sqrt{n}$. This was improved  in 1982 by Shor~\cite{shor1982} to $n-O(\log^2 n)$. His paper had a mistake which was later rectified, using the original approach, by Hatami and Shor \cite{hatami2008lower}. For  nearly forty years, this was the best known bound for the Ryser-Brualdi-Stein conjecture. Recently Keevash, Pokrovskiy, Sudakov and Yepremyan \cite{KPSY} improved this old bound and showed 
that one can find a rainbow matching of size $n-O\big(\frac{\log{n}}{\log{\log{n}}}\big)$ in every proper edge coloring of $K_{n,n}$ which uses $n$ colors. Finally, last year, in a groundbreaking development, Montgomery \cite{M23} proved the Ryser-Brualdi-Stein conjecture for large even $n$.

\begin{theorem}\label{main-ryser} For every sufficiently large $n$, every proper edge coloring of the complete graph $K_{n,n}$ with $n$ colors contains a rainbow matching of size $n-1$.
\end{theorem}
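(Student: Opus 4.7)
The plan is to follow the absorption method. We reserve a small, carefully chosen rainbow ``absorbing'' substructure $A$ at the start, extend greedily to a near-perfect rainbow matching $M$ on the rest of $K_{n,n}$, and then rearrange $A$ to incorporate the few leftover vertices and colors, yielding a rainbow matching of size $n-1$. Strategies along these lines have been pushed to size $n - O(\log n / \log \log n)$ by Keevash, Pokrovskiy, Sudakov and Yepremyan; the remaining gap up to $n-1$ is what the absorber must close.

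The first main step is to build a \emph{switcher} gadget. Given a row vertex $u$, a column vertex $v$, and a color $c$, I would look for a small rainbow substructure $F_{u,v,c} \subseteq K_{n,n}$ admitting two rainbow matchings $M_0, M_1$ on the same vertex set and using the same set of colors, such that $M_0$ avoids $u$, $v$, $c$, while $M_1$ contains an edge through $u$ and $v$ colored by $c$. Such gadgets can be found using short alternating walks, exploiting the fact that each color class is itself a perfect matching of $K_{n,n}$ and hence behaves like an additional permutation that one can compose with column/row swaps. A counting argument should show that for each triple $(u,v,c)$ there are $\Omega(n^{O(1)})$ such gadgets, distributed fairly evenly across the square.

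The second and main step is to stitch these switchers into one global absorber $A$. Following the template--plus--distributor paradigm, I would first fix a random ``skeleton'' rainbow matching $T$ of size $\eps n$ (for a small constant $\eps>0$) with certain pseudorandom properties, and then attach to $T$ a random collection of switchers so that every possible small deficiency set $D$ — a few uncovered row vertices, column vertices, and unused colors with $|D| \le \delta \log n / \log \log n$ — can be realized as the set of elements newly covered when one flips a subset of the switchers. The existence of such an $A$ would be proved by a probabilistic argument: sample switcher placements at random, use Talagrand/LLL type concentration to show that with positive probability every small deficiency admits a compatible flipping pattern, and verify that the resulting $A$ remains a rainbow matching despite overlaps by disjointifying switchers through a random assignment of roles.

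The third step is routine given the first two: run an iterative random rainbow matching extension on $K_{n,n} \setminus V(A)$ using the colors not appearing in $A$, which yields a rainbow matching $M$ whose deficiency $D$ has size $O(\log n / \log \log n)$ by the Keevash--Pokrovskiy--Sudakov--Yepremyan bound applied to the residual properly edge-colored bipartite graph. The absorbing property of $A$ then produces an alternative rainbow matching $A'$ on $V(A)$ whose uncovered elements are exactly complementary to $D$, and $M \cup A'$ is the required rainbow matching of size $n-1$.

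The decisive obstacle is the second step: constructing an absorber that is simultaneously robust against every possible small deficiency while remaining a single rainbow matching in the \emph{same} properly edge-colored $K_{n,n}$. The difficulty is that switchers for different target triples share vertices and colors, so flipping one switcher can destroy the rainbow or matching property of another. Overcoming this requires a layered randomization combined with a distributor network that reroutes any deficiency through vertex- and color-disjoint switchers, and a delicate LLL-based analysis to show that all the required choices can be made consistently with positive probability.
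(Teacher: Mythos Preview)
Your outline captures the broad absorption template correctly, and indeed the paper (which is a survey and does not itself prove the theorem, attributing it to Montgomery) describes the proof along similar lines: set aside an absorbing structure, build a near-perfect rainbow matching disjointly, and then use the absorber to finish. You also correctly identify distributive absorption as the right global framework and the KPSY bound as the tool for the near-perfect step.

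However, there is a genuine gap, and it is precisely the one you flag as the ``decisive obstacle'' without resolving. Your proposed switcher construction is a generic counting-plus-LLL scheme: find many local gadgets for each triple $(u,v,c)$, randomly select some, and hope that disjointness can be enforced with positive probability. This is essentially the approach that had been attempted before Montgomery and does not go through; the overlaps between switchers in vertices and colors are too dense for a black-box LLL to disentangle. The paper's summary of Montgomery's proof highlights the missing idea: one must first extract \emph{approximate algebraic structure} from the coloring. Concretely, one looks for many $3$-edge paths with colors $c_1,c_3,c_2$ for which the closing edge has a \emph{fixed} color $c_1+c_2-c_3$ (as it would in a group multiplication table). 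This ``addition structure'' is what makes the local absorbers compatible with one another and is the genuine new ingredient; without it the disjointification step you describe does not have enough room to succeed. Your sketch treats the gadget existence as a routine counting matter and the global assembly as a routine concentration matter, but both of these are exactly where the difficulty lies and where Montgomery's algebraic insight is indispensable.
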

\noindent
While this confirms the conjecture for even $n$, it falls short of the conjecture by an additive constant $1$ when $n$ is odd. This slack is important in many places in the proof and it seems that further ideas will be needed to find a perfect rainbow matching for odd $n$.

The proofs of the recent bounds on the Ryser-Brualdi-Stein conjecture are rather involved so we only briefly discuss some highlights here.
One of the key ideas needed to find transversals of size $n-O(\frac{\log n}{\log\log n})$ in \cite{KPSY} is a claim that a union of a random nearly-perfect matching and an arbitrary regular bipartite graph produces with high probability a good expander, i.e.\ a graph with strong connectivity properties. 

Given a copy of $K_{n,n}$ which is properly colored with $n$ colors, the authors of \cite{KPSY} begin by finding 
a large random nearly-perfect rainbow matching, containing $n-n^{1-\eps}$ edges for some fixed small $\eps>0$. This initial matching is found using the so-called R\"odl nibble \cite{Ro}, which is a celebrated probabilistic techniques with many applications. This matching is then successively modified in several steps, each time getting a rainbow matching that is one edge larger. At each step, given a current rainbow matching $M$, one takes two 
vertices $x,y\in V(K_{n,n})\setminus V(M)$ which are in different parts, and an $x,y$-alternating path $P$, i.e.\ a path such that even edges in this path are in $M$ and odd edges (starting with the edge containing $x$) all have different colors not used on $M$. Then, removing the even edges of $P$ from $M$ and adding the odd edges gives a rainbow matching with one more edge. Let us call $G$ the graph consisting of the edges with the colors not used on $M$ and note that $G$ is regular (since every vertex touches edges of all the colors).
Since $M$ contains an initial random matching, its union with $G$ is an expander. This essentially implies 
 that for every pair of vertices not in the matching and every set of $C\log n/\log\log n$ colors (for some fixed constant $C$) not on $M$ such a path $P$ can be found. Moreover, this property is sufficiently robust that it can be used to make iterative adjustments (with some additional care) until the rainbow matching uses all but $C\log n/\log\log n$ of the original $n$ colors.
 
Note that the error term of order $\Omega(\log n/\log\log n)$ is a natural barrier for this method, since the diameter of expander graph in this proof is at least $\Omega(\log n/\log\log n)$. This means that if $M$ contains more than $n-\Omega(\log n/\log\log n)$, it will not be possible to find a path $P$ described above, since such paths must contain at least $\Omega(\log n/\log\log n)$ colors which do not appear in $M$.

A significant part of the work of Montgomery \cite{M23} is to exploit the structure of the proper edge coloring of $K_{n,n}$ and to find some 
some way to identify approximate algebraic properties of this edge coloring, as well as the introduction of an `addition structure' to use these properties. Recall that if the edge coloring was coming from addition table of a finite abelian group and we had three 
edges $(x,y)$ of color $c_1$ and $(x',y')$ of color $c_2$ and $(x,x')$ of color $c_3$, then $x+y=c_1, x'+y'=c_2$ and $x+x'=c_3$. Therefore the color of the edge
$(y,y')$ is determined and is equal to $c_1+c_2-c_3$. Having many such three edge paths with colors $c_1,c_3, c_2$ for which the color of the fourth edge is the same 
can be used to find an absorbing structure which, by switching the edges in the rainbow matching, can free pairs of vertices (that are no longer covered by rainbow matching).
Such an absorbing structure can be found and set aside, before a large rainbow matching is found disjointly from it, 
using the method of distributive absorption. 

Absorption is a versatile technique for extending approximate results into exact ones, which was first systematically used 
by R\"odl, Ruci\'nski and Szemer\'edi \cite{RRS}.  Such ideas also appeared in earlier works of Erd\H{o}s-Gy\'arf\'as-Pyber \cite{EGP} and Krivelevich \cite{K}. It has since been implemented in many different settings using a variety of creative constructions. Distributive absorption is an efficient and powerful variant of this method, introduced by Montgomery \cite{Mdist}, which can be used to build a global absorbing structure from small local absorbers.  For more details about this very impressive proof we refer the interested reader to the excellent survey by Montgomery \cite{Mbritish}.

A Latin array is  an $n\times n$ square array filled with an arbitrary number of symbols such that no symbol appears twice in the same row or column. Latin arrays are natural extensions of Latin squares, and also have been extensively studied. A familiar example of such an array is a multiplication table for the elements of two subsets of equal size in some group. 
Having a Latin array is equivalent to having a proper edge coloring of the complete bipartite graph $K_{n,n}$ without any restriction on the number of colors.
It is generally believed that extra colors in a proper edge coloring should help to find a large rainbow matchings.
Although there are proper edge colorings of $K_{n,n}$ with strictly more than $n$ colors and not even a single rainbow perfect matching, there are several results which confirm this intuition. 
For example, Montgomery, Pokrovskiy and Sudakov \cite{MPS3} showed that with some very
weak additional constraints one can find many disjoint rainbow perfect matchings. 
\begin{theorem}\label{manysymbols}  
For every $\eps>0$ and sufficiently large $n$ the following holds. Every proper edge coloring of $K_{n,n}$ with at least $\eps n$ colors appearing at most 
$(1-\eps)n$ times contains at least $(1-\eps) n$ edge disjoint perfect rainbow matchings.
\end{theorem}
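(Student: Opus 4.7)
The plan is to build the $(1-\eps)n$ rainbow perfect matchings one at a time, combining a R\"odl-nibble-type argument to construct most of each matching with a distributive absorption device (in the spirit of Montgomery's proof of Theorem~\ref{main-ryser}) to cover the last few vertices.

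Call a color \emph{light} if it appears at most $(1-\eps)n$ times in the given proper edge-coloring, and let $\mathcal{L}$ be the set of light colors, so that $|\mathcal{L}|\ge \eps n$ by hypothesis. The abundance of light colors will provide the slack needed throughout the argument: every rainbow perfect matching we extract removes exactly one edge from each of $n$ distinct colors, so after $(1-\eps)n$ rounds each color has lost at most $(1-\eps)n$ edges. Crucially, any color not in $\mathcal{L}$ originally appeared more than $(1-\eps)n$ times, so still has edges to spare, while the light colors retain a positive fraction of their edges as long as we spend them carefully.

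The main iterative step, performed for $k=1,2,\ldots,(1-\eps)n$, is to find a rainbow perfect matching $M_k$ in the residual bipartite graph $G_k = K_{n,n} \setminus (M_1 \cup \cdots \cup M_{k-1})$, which is $(n-k+1)$-regular and hence of minimum degree at least $\eps n$. I would first set aside a small randomly chosen absorbing structure $\mathcal{A}_k \subseteq G_k$ built out of local switching gadgets that use light colors, with the property that for any sufficiently small set $U$ of pairs of uncovered vertices, one can rearrange edges inside $\mathcal{A}_k$ to plug in $U$ in a rainbow manner. Then I would apply the R\"odl nibble to $G_k \setminus \mathcal{A}_k$ to produce a near-perfect rainbow matching missing only $o(n)$ vertices, and finally invoke $\mathcal{A}_k$ to absorb those vertices into a perfect rainbow matching $M_k$.

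The main obstacle will be maintaining, across all $(1-\eps)n$ iterations simultaneously, both the quasi-regularity of $G_k$ required for the nibble and a healthy reserve of light-color edges required for the absorbers. In particular, the sub-absorbers in $\mathcal{A}_k$ must be designed so that they do not consume any single color or any single vertex too aggressively, and so that their absorbing property is robust to the edge deletions of later rounds. The light-color hypothesis $|\mathcal{L}|\ge \eps n$ is precisely what is needed: it ensures enough ``free'' color slots per vertex to build and preserve these absorbers while still letting the nibble succeed on a residual graph of degree that shrinks down to $\eps n$. Carrying out this bookkeeping, and verifying the standard concentration estimates in the nibble at each scale, will be the technical heart of the proof.
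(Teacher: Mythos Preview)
The survey does not supply a proof of Theorem~\ref{manysymbols}; it is quoted from \cite{MPS3}. So there is no in-paper argument to compare against line by line, but your outline can still be assessed on its own terms and against the shape of the argument actually used in \cite{MPS3}.

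Your sketch has a real gap, and it is exactly the point you yourself label ``the main obstacle'' and then leave unresolved. In a purely sequential scheme, the absorber $\mathcal{A}_k$ for a late round must be carved out of a residual graph $G_k$ that has already been shaped by $k-1$ earlier matchings constructed with no regard for what round $k$ will need. A light color starts with at most $(1-\eps)n$ edges and can legitimately lose one edge per round, so by round $k\approx(1-\eps)n$ it may be completely exhausted; the sentence ``light colors retain a positive fraction of their edges as long as we spend them carefully'' is not an argument but a restatement of what has to be proved. Nothing in your nibble step controls \emph{which} colors $M_1,\dots,M_{k-1}$ consume, so you have no handle on the light-color content of $G_k$ beyond its bare $(n-k+1)$-regularity, and no guarantee that any switching gadgets survive. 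Declaring that the hypothesis $|\mathcal{L}|\ge \eps n$ ``is precisely what is needed'' does not supply the missing mechanism.

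The argument in \cite{MPS3} avoids this by organising the proof globally rather than one matching at a time: the structures used to complete all $(1-\eps)n$ target matchings are identified and set aside at the outset, in the full colored $K_{n,n}$, before any matching is extracted; near-perfect rainbow matchings are then found in what remains and completed against the reserved structure. If you want to rescue your outline, the minimum change is to move the construction of all the $\mathcal{A}_k$ to step zero and show they can be chosen pairwise edge-disjoint (and disjoint from the nibble region) while each still absorbs an arbitrary small leftover set. That global reservation step, not the per-round nibble, is where the light-color hypothesis actually earns its keep, and it is the substantive content you are currently missing.
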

\noindent
Note that in a proper edge coloring each color can appear on at most $n$ edges. Hence, 
this result suggests that the only Latin arrays without transversals are small perturbations of Latin squares.

The methods used to study the Ryser-Brualdi-Stein conjecture usually also apply to give results about general proper edge colorings of $K_{n,n}$.
For example the authors of \cite{KPSY} showed that any such coloring contains a rainbow matching of size 
$n-O\big(\frac{\log{n}}{\log{\log{n}}}\big)$. As a consequence of Theorem \ref{manysymbols}, we only need to consider colorings which are close to the colorings that use exactly $n$ 
colors. Similarly, the techniques developed by Montgomery \cite{M23} apply to show that every proper edge coloring of the complete graph $K_{n,n}$ contains a rainbow matching of size $n-1$.

We will finish this section by discussing two extensions of the problem of finding large rainbow matchings in properly edge-colored bipartite graphs $K_{n,n}$.

The first extension considers host graphs that are not complete bipartite graphs. This problem was proposed by Aharoni and Berger~\cite{aharoni2009rainbow}, who conjectured that if $G$ is a bipartite multigraph (i.e., more than 1 edge is allowed between each pair of vertices) which is properly colored such that at least $n-1$ colors have at least $n$ edges, then it has a rainbow matching of size $n-1$. In fact, this is a generalisation of Conjecture~\ref{prob-partial-ryser-brualdi-stein}, since a proper $n$ edge coloring of $K_{n,n}$ clearly contains $n-1$ colors of size $n$. 
For the problem of Aharoni and Berger even finding
an approximate solution appeared to be very difficult. It took a sequence of improvements (see the recent survey by Pokrovskiy~\cite{Alexeysurvey} for this history, and further related problems), before Pokrovskiy~\cite{pokrovskiy2018approximate} gave such a solution. He showed that, for each $\eps>0$ and sufficiently large $n$, having $n-1$ colors with at least $(1+\eps)n$ edges is sufficient for a rainbow matching with $n-1$ edges. His original proof was rather involved and a much simpler proof, using a probabilistic sampling trick, was found in \cite{correia2023short}.

For our second extension, we consider what happens if we no longer require that the coloring of $K_{n,n}$ to be proper, asking only that each of $n$ colors appears $n$ times (this is equivalent to what is known as an equi-$n$-square). In 1975, Stein~\cite{stein1975transversals} conjectured that every such coloring has a rainbow matching with $n-1$ edges. This ambitious conjecture was open for 40 years 
but was eventually disproved by Pokrovskiy and Sudakov~\cite{pokrovskiy2019counterexample}, showing that one can not hope for rainbow matching of size larger than
$n-c \log n$. However, there was a belief that every such coloring at least contains a rainbow matching of asymptotically correct size. After a sequence of improving bounds~\cite{stein1975transversals,aharoni2017conjecture,anastos2024notefindinglargetransversals}, Chakraborti, Christoph, Hunter, Montgomery, and Petrov~\cite{chakraborti2024almostfulltransversalsequinsquares} confirmed this by showing that every coloring of $K_{n,n}$ in which each of $n$ colors appears $n$ times contains a rainbow matching with $(1-o(1))n$ edges (and moreover can even be almost decomposed into such matchings).

The result in \cite{chakraborti2024almostfulltransversalsequinsquares} is very recent, and shows that the study of rainbow subgraphs remains an active and exciting area. One very interesting open question is an outstanding conjecture of Stein~\cite{stein1975transversals} from 1975, giving a midway point between his later disproved conjecture and the Ryser-Brualdi-Stein conjecture. It says that, given a coloring of $K_{n,n}$ in which each of the $n$ colors appears $n$ times such that in one of the vertex classes no two edges of the same color meet (i.e., the coloring is proper on one side), is there always a rainbow matching with $n-1$ edges?

\section{Rainbow trees and decomposition problems}
One of the most natural problems in the study of rainbow structures is to determine which graphs appear as rainbow subgraphs in every properly edge-colored complete graph $K_n$. Since for even $n$ one can properly color $K_n$ using only $n-1$ colors, we may ask in general only for rainbow subgraphs with at most $n-1$ edges. This leads to a natural question: which $n$-vertex trees have a rainbow copy in any proper coloring of $K_n$? 

The special case of this question was considered in 1980 by Hahn~\cite{hahn1980jeu} who conjectured that there is always a rainbow $n$-vertex path. 
This was disproved in \cite{maamoun1984problem} by considering a complete graph on the vertex set $\mathbb{F}_2^k$, where the edge $(x,y)$ is colored using the color $x+y\in \mathbb{F}_2^k\backslash \{0\}$. Recall that this graph is precisely the Cayley sum graph of $\mathbb{F}_2^k$ with respect to all non-zero elements of this group, along with its canonical edge coloring. For any path $P=x_1x_2\ldots x_n$ in this coloring it is easy to see that the sum of the colors of the edges of this path is $x_1+x_n\not =0$. But the sum of all the colors (i.e., all non-zero elements of the group) is zero, showing that there is no rainbow path covering all the vertices. Nevertheless, it was widely believed that any properly colored $K_n$ contains a rainbow path covering all but exceptionally few vertices. In particular, Andersen~\cite{andersen1989hamilton} in 1989 conjectured that one can always find a rainbow path covering $n-1$ vertices. The progress on this conjecture was initially slow, despite the efforts of various researchers, see, e.g., \cite{akbari2007rainbow, Gyarf1, Gyarf2, gebauer, chen}.
 Several years ago, the problem was resolved asymptotically by Alon, Pokrovskiy and Sudakov~\cite{alon2016random}, who showed that any properly colored $K_n$ contains both a rainbow path and a rainbow cycle with $n-O(n^{3/4})$ vertices (this error term was slightly improved in \cite{BM}).

For a general tree, until recently, there were no results that showed the existence of rainbow copies of large trees in properly edge-colored complete graphs. The above example for the $n$-vertex path can be extended (see \cite{BenzingTrees}) to show that there are proper edge colorings of $K_n$ which do not contain rainbow copies of certain spanning trees. However, it is still possible that nearly-spanning trees exist in all proper colorings of complete graphs. This follows from the following general result, proved by Montgomery, Pokrovskiy and Sudakov \cite{MPS1}. 
Recall that an edge-coloring of $K_n$ is
locally $k$-bounded if each vertex is in at most $k$ edges of any one color. Therefore, a proper coloring is exactly a locally $1$-bounded coloring. 

\begin{theorem}\label{almostspan}
For $\eps>0,k\in \mathbb{N}$ and sufficiently large $n$, any locally $k$-bounded edge-coloring of $K_n$ contains a rainbow copy of every tree with at most $(1-\eps)n/k$ vertices.
\end{theorem}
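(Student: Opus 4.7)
The plan is to prove Theorem~\ref{almostspan} via the absorption method, a now-standard technique for extending approximate embeddings into near-spanning ones. The starting point is a structural dichotomy for trees: every $t$-vertex tree either has at least $t/4$ leaves, or contains a bare path (an induced path all of whose internal vertices have degree $2$ in $T$) of length $\Omega(\log t)$. Each case yields a locus of flexibility that can be funnelled into an absorbing gadget; I will outline the leaf-rich case, since the bare-path case is handled analogously by splicing small absorbers along the long path.

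Set aside a set $L$ consisting of a small constant fraction (depending on $\eps$ and $k$) of the leaves of $T$, and let $T' = T \setminus L$, which is still a tree. Reserve a random vertex set $R \subseteq V(K_n)$ and a random color set $C_R$, each of size about $\eps n/(10k)$, sampled independently. Standard concentration shows that, with high probability, every vertex retains $(1-o(1))n$ neighbors outside $R$ along edges with colors outside $C_R$; moreover, each vertex $v$ meets $\Omega(|R|/k)$ distinct colors of $C_R$ on its edges into $R$.

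The main phase embeds $T'$ as a rainbow subgraph into $V(K_n) \setminus R$ using colors outside $C_R$. Fix an ordering $v_1, v_2, \ldots$ of $V(T')$ in which each subsequent vertex attaches to a single earlier neighbor. A naive greedy embedding does not suffice when $\eps$ is small relative to $1/k$, since the $\sim k(i-1)$ bound on forbidden extensions at step $i$ may exceed $n$. Instead, perform the embedding in batches via a nibble-style randomized procedure: at each batch a small set of vertices is placed using random weighted choices, and Chernoff/Azuma concentration ensures the used colors and occupied vertices remain well-spread, so that at every still-unused vertex the remaining available edges form a set of density $(1-o(1))n$. Once $T'$ is placed, extending to the leaves of $L$ reduces to a rainbow matching problem in the auxiliary bipartite graph on $L$ and $R \times C_R$, in which $\ell \in L$ with embedded parent $p_\ell$ is joined to a pair $(v, c)$ whenever the edge $(p_\ell, v)$ carries color $c$. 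Local $k$-boundedness together with the spread property of the embedding make this auxiliary graph sufficiently dense and near-regular that a Hall-type argument --- or an Aharoni--Berger style rainbow matching theorem --- delivers the required matching.

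The main obstacle is controlling the interaction between the nibble-style approximate embedding and the final rainbow matching step. These two phases are in tension: one wants the main embedding to spread used colors widely (so that the nibble bookkeeping stays accurate), yet one also needs it to leave every leaf in $L$ with many valid rainbow completions into $R$ along fresh colors in $C_R$. The standard remedy, pioneered by Montgomery, is distributive absorption --- bundling many small local absorbers into a single global gadget robust to random fluctuations. Adapting this framework to the rainbow, locally-$k$-bounded setting, and verifying the accompanying concentration estimates via martingale techniques, constitute the core technical difficulty.
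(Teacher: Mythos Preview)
Your structural dichotomy is false as stated. You claim that a $t$-vertex tree with fewer than $t/4$ leaves contains a single bare path of length $\Omega(\log t)$, but consider a tree whose ``reduced'' shape (contracting each maximal bare path to an edge) is a complete binary tree of depth $d$, with every edge replaced by a bare path of length~$3$: this tree has roughly $6\cdot 2^{d}$ vertices, only about $2^{d}<t/4$ leaves, and yet every bare path has length exactly~$3$. The correct lemma (Krivelevich; Montgomery) yields \emph{linearly many} vertex-disjoint bare paths of any prescribed constant length, and it is this multiplicity that gives absorbing capacity. A single logarithmic-length path could absorb at most $O(\log t)$ leftover items, which is not ``analogous'' to the leaf-rich case. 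Even in your leaf-rich outline the hard step is glossed over: after removing a small set $L$ of leaves, $T'$ still has essentially $(1-\eps)n/k$ vertices, so embedding $T'$ is the whole problem; your ``nibble-style randomized procedure'' is asserted rather than described, and the claim that at every stage there remain $(1-o(1))n$ available edges is quantitatively wrong once $\Theta(n/k)$ colors have been used (up to $(1-\eps)n$ edges at each vertex may then be blocked).

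The approach the paper summarizes (from \cite{MPS1}) is genuinely different and does not rely on absorption at all. The tree is decomposed into large matchings; for each such matching one reserves a random vertex set and a random color set of the same density, and the key lemma is that a near-perfect rainbow matching can be found inside such a random pair. Vertices of high degree in $T$, where the matching decomposition breaks down, are handled by separate deterministic arguments. Distributive absorption enters only later, in the proof of Theorem~\ref{ringel} (Ringel's conjecture), where one must embed a tree with \emph{exactly} $n$ edges and there is no $\eps$-slack to exploit.
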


Since a locally $k$-bounded edge-coloring of $K_n$ may have only $(n-1)/k$ distinct colors, Theorem~\ref{almostspan} is tight up to the constant $\eps$ for each $k$. One of the distinguishing features of this result is that it places no conditions on the trees other than the number of vertices they may have. In comparison, all of the previous graph packing and labeling results required a maximum degree assumption, which was conjectured to be unnecessary. 
An important tool in the proof of this theorem is to demonstrate that, in properly colored complete graphs, a large rainbow matching can typically be found in a random vertex set using a random set of colors chosen with the same density. In fact, such a matching can cover almost all the vertices in the random vertex set. This allows most of a large tree to be embedded if it can be decomposed into certain large matchings. Contrastingly, one needs deterministic methods to embed vertices in the tree with high degree. These techniques for embedding trees with high degree vertices may offer new approaches to related problems.

\subsection{Graph decompositions, labelings and orthogonal double-covers.}
Theorem~\ref{almostspan} has applications in several different areas of graph theory. We now discuss three such applications to graph decompositions, graph labelings, and orthogonal double-covers. With each application this theorem proves an asymptotic form of a well-known conjecture. In each case, we consider some special edge coloring of $K_n$, coming from a graph-theoretic problem, to which we apply Theorem~\ref{almostspan}.

\medskip
\noindent
\textbf{Graph decompositions.}\,
We say that a graph $G$ has a decomposition into copies of a graph $H$ if the edges of $G$ can be partitioned into edge-disjoint subgraphs isomorphic to $H$.
In 1847, Kirkman studied decompositions of complete graphs $K_n$ and showed that they can be decomposed into copies of a triangle if, and only if, $n\equiv 1 \text{ or } 3 \pmod 6$. Wilson~\cite{wilson} generalized this result by proving necessary and sufficient conditions for a complete graph $K_n$ to be decomposed into copies of \emph{any} graph, for large $n$.
A very old problem in this area, posed in 1853 by Steiner, says that, for every $k$, modulo an obvious divisibility condition every sufficiently large complete $r$-uniform hypergraph can be decomposed into edge-disjoint copies of a complete $r$-uniform hypergraph on $k$ vertices. This problem was the so-called ``existence of designs'' question and has practical relevance to experimental designs. It was resolved only very recently in spectacular
work by Keevash \cite{keevash} (see the subsequent work of \cite{glock-all} for an alternative proof of this result). Over the years graph and hypergraph decomposition problems have been extensively studied and
by now this has become a vast topic with many exciting results and conjectures (see, for example, \cite{gallian2009dynamic,wozniak2004packing,yap1988packing}).

Decompositions of complete graphs into large trees, whose size is comparable with the size of the complete graph (in contrast with the existence of designs mentioned above), also has a long history. The earliest such result  was obtained more than a century ago by Walecki. In 1882, he proved that a complete graph $K_n$ on an even number of vertices can be partitioned into edge-disjoint Hamilton paths (this is a path which visits every vertex of the graph exactly once). Since paths are a very special kind of tree it is natural to ask which other large trees can be used to decompose a complete graph. 
One of the oldest and best known conjectures in this area, posed by Ringel in 1963 \cite{ringel1963theory},
deals with the decomposition of complete graphs into edge-disjoint copies of a tree.

\begin{conjecture} 
\label{ringelconj}
Any tree with $n+1$ vertices packs $2n+1$ times into the complete graph $K_{2n+1}$.
\end{conjecture}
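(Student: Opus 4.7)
The plan is to use Theorem~\ref{almostspan} together with distributive absorption to find $2n+1$ edge-disjoint copies of $T$ in $K_{2n+1}$. The counting observation is that $|E(K_{2n+1})|=n(2n+1)$ and $|E(T)|=n$, so $2n+1$ edge-disjoint copies of $T$ automatically form a decomposition. A convenient auxiliary coloring for controlling the embedding is the canonical proper edge coloring of $K_{2n+1}$ on vertex set $\mathbb{Z}_{2n+1}$, in which $\{i,j\}$ receives color $i+j \pmod{2n+1}$; the $2n+1$ color classes are near-perfect matchings of size $n$, and the goal is to find $2n+1$ rainbow copies of $T$ that are also pairwise edge-disjoint.

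First I would split $T$ into a ``bulk'' subtree $T^{\circ}$ on $(1-\eps)n$ peripheral vertices and a small ``reserve'' $R$ consisting of $\eps n$ carefully chosen vertices, typically leaves or vertices adjacent to high-degree vertices. For each target copy $t \in \{1,\ldots,2n+1\}$ I would, in advance, set aside an edge-disjoint \emph{distributive absorber} $A_t \subseteq K_{2n+1}$ in the spirit of Montgomery's technique~\cite{Mdist, M23}, designed so that any ``typical'' rainbow embedding of $T^{\circ}$ whose residue outside $A_t$ is sufficiently pseudorandom can be extended through $A_t$ to a full rainbow copy of $T$ realizing the reserve $R$. These $2n+1$ absorbers are constructed simultaneously so that they are pairwise edge-disjoint and collectively occupy only a small fraction of $K_{2n+1}$.

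With the absorbers fixed, I would iteratively embed the bulks $T^{\circ}$ into the residual graph, one copy at a time. At each iteration Theorem~\ref{almostspan} applies with $k=1$, since $|V(T^{\circ})| \leq (1-\eps)(2n+1)$ and the residual coloring remains essentially proper after removing previously placed copies; a randomized choice of embedding keeps the residue pseudorandom across iterations. High-degree vertices of $T$ must be embedded deterministically rather than randomly, as noted after the statement of Theorem~\ref{almostspan}; their images in each of the $2n+1$ copies are fixed up front to align with the absorber gadgets. After all $2n+1$ bulk embeddings are in place, each $A_t$ is activated to complete the $t$-th bulk into a full copy of $T$.

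The main obstacle is the simultaneous construction of the absorbers. Each $A_t$ must be robust against a wide family of possible leftovers from the randomized iteration, the $2n+1$ absorbers must fit pairwise edge-disjointly while leaving enough room for the bulk embeddings, and their color budgets must be compatible so that each completion step actually produces a rainbow copy and so that the union of all copies consumes every color class exactly $n$ times. Orchestrating these constraints across all $2n+1$ iterations, while placing the high-degree vertices of $T$ consistently, is the principal technical difficulty; once this infrastructure is in place, the counting argument in the first paragraph converts the $2n+1$ edge-disjoint rainbow copies into the desired decomposition of $K_{2n+1}$.
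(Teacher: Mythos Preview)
Your proposal misses the central reduction that the paper exploits, and in doing so it creates a much harder problem than necessary. In the paper the packing is obtained from a \emph{single} embedding: one works with the near-distance coloring of $K_{2n+1}$ (color of $ij$ is $k\in[n]$ with $i-j\equiv\pm k\pmod{2n+1}$), which is locally $2$-bounded with exactly $n$ colors. A rainbow copy of $T$ in this coloring uses each color exactly once, and the $2n+1$ cyclic shifts of that single copy are automatically pairwise edge-disjoint because shifts preserve colors. So the entire packing problem reduces to finding one rainbow tree. Your coloring $i+j\pmod{2n+1}$ has $2n+1$ colors, so a rainbow copy of $T$ uses only $n$ of them; shifts then do nothing useful, and you are forced to manufacture all $2n+1$ copies separately.

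This forces you into the iterative scheme, and here there is a genuine gap: Theorem~\ref{almostspan} is stated for edge-colorings of the \emph{complete} graph $K_n$. After you remove even one copy of $T$ the host is no longer complete, and there is no mechanism in the theorem to handle sparser hosts; by the time you have placed most of the copies the residual graph is extremely sparse and nothing like a $K_n$. The phrase ``the residual coloring remains essentially proper'' does not rescue this, since properness of the coloring is not the issue---completeness of the graph is. The absorber infrastructure you outline would have to do essentially all of the work, and you give no construction for it. Note also that the paper does not claim a proof of the full conjecture: it proves the asymptotic Theorem~\ref{approxringelkotzig} via the ND-coloring and one application of Theorem~\ref{almostspan}, and then cites Theorem~\ref{ringel} (for large $n$) from~\cite{MPS2}, whose proof likewise finds a single rainbow copy in the ND-coloring but requires substantial additional ideas beyond Theorem~\ref{almostspan}.
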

\noindent
Note that a complete graph $K_{2n+1}$ has precisely $(2n+1) \cdot n$ edges. Therefore it is conceivable to partition it into $2n+1$ copies of the same tree which has $n$ edges.

This conjecture is known for many very special classes of trees such as caterpillars, trees with $\leq 4$ leaves, diameter $\leq 5$ trees,   trees with $\leq 35$ vertices, and several other classes of trees (see Chapter 2 of \cite{gallian2009dynamic} and the references therein).
There are also some partial general results in the direction of Conjecture~\ref{ringelconj}. Typically, for these results, an extensive technical method is developed which is capable of almost-packing any appropriately-sized collection of certain sparse graphs, see,
e.g., \cite{bottcher2016approximate, messuti2016packing, ferber2017packing, kim2016blow}.   In particular, Joos, Kim, K{\"u}hn and Osthus~\cite{JKKO} proved the above conjecture for very large bounded-degree trees. Ferber and Samotij~\cite{ferber2016packing} obtained an almost-perfect packing of almost-spanning trees with maximum degree $O(n/\log n)$ (giving what is known as an approximate version of the conjecture).
A different proof of the approximate version of Ringel's conjecture for trees with maximum degree $O(n/\log n)$ was obtained by
Adamaszek, Allen, Grosu, and Hladk{\'y}~\cite{adamaszek2016almost}, using graph labelings. 
Theorem \ref{approxringelkotzig} can be used to give the first asymptotic solution for Ringel's conjecture applicable with no degree restriction.

\begin{theorem}\label{approxringelkotzig}
For $\eps>0$ and sufficiently large $n$, any $(n+1)$-vertex tree packs at least $2n+1$ times into the complete graph $K_{(2+\eps)n}$.
\end{theorem}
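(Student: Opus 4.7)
The plan is to reduce the packing to a single application of Theorem~\ref{almostspan}, by exploiting a highly symmetric edge-coloring of the host graph. First, I would choose an odd integer $M$ with $2n+1 \le M \le (2+\eps)n$ (possible once $n \geq 2/\eps$), identify $V(K_M)$ with $\mathbb{Z}_M$, and put on $K_M$ the \emph{circular-difference coloring} $\chi(\{i,j\}) = \min(|i-j|,\, M-|i-j|) \in \{1, \ldots, (M-1)/2\}$. Every vertex is incident to exactly two edges of each color, so $\chi$ is locally $2$-bounded, and $\chi$ is invariant under the $\mathbb{Z}_M$-translation action on the vertex set.

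Next, I would apply Theorem~\ref{almostspan} with $k=2$ to extract a rainbow copy $T_1$ of the tree $T$ inside $(K_M,\chi)$. The theorem requires $n+1 \leq (1-\delta) M/2$ for some fixed $\delta>0$; setting $\delta < \eps/(2+\eps)$ gives $(1-\delta)(2+\eps)/2 > 1$, so this inequality holds for all sufficiently large $n$. With $T_1$ in hand, I would form the $M$ translates $T_1, T_1+1, \ldots, T_1+(M-1)$ of $T_1$ under $\mathbb{Z}_M$; each is a copy of $T$ in $K_M$, so this yields $M \ge 2n+1$ copies.

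The only thing left to verify is that these translates are pairwise edge-disjoint. Suppose $T_1+r$ and $T_1+s$ share an edge $\{a,b\}$ with $r \ne s$; then $\{a-r, b-r\}$ and $\{a-s, b-s\}$ both lie in $E(T_1)$ and, since $\chi$ is translation-invariant, have the same color $\chi(\{a,b\})$. By the rainbow property of $T_1$ these must be the \emph{same} edge, so either $(a-r,b-r) = (a-s,b-s)$, forcing $r=s$, or $a-r=b-s$ and $b-r=a-s$. The latter yields $2(a-b)\equiv 0 \pmod{M}$, which is impossible since $M$ is odd and $a \ne b$.

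The main obstacle I expect is the tightness in the application of Theorem~\ref{almostspan}: with $k=2$ the rainbow-tree budget is essentially $M/2$, only barely larger than $n$, so the $\eps n$ surplus of vertices is exactly what makes the embedding possible and determines the constant in the statement. The other delicate point---taking $M$ odd---is what eliminates the antipodal collision between translates that would otherwise appear for even $M$ via the color $M/2$; once both of these are arranged, the rotational symmetry converts a single rainbow $T_1$ into the full packing essentially for free.
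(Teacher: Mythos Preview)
Your proof is correct and follows essentially the same route as the paper: your circular-difference coloring is exactly the paper's ND-coloring on $\mathbb{Z}_M$, and the argument that the $M\ge 2n+1$ cyclic translates of one rainbow copy are pairwise edge-disjoint is the same rotation trick (your explicit handling of the swapped case using $M$ odd is precisely why the paper takes $M=2\ell+1$). One small point to tighten: the hypothesis $n+1\le(1-\delta)M/2$ of Theorem~\ref{almostspan} requires $M$ near the top of your allowed range, so you should take, say, $M$ to be the largest odd integer at most $(2+\eps)n$ (the paper uses $M=2\ell+1$ with $\ell=(1+\eps/3)n$), rather than allowing $M$ as small as $2n+1$.
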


\begin{figure}[hbt!]
\vspace{-0.4cm}

  \centering
\begin{tikzpicture}

\def\vxrad{0.1cm};
\def\circrad{1.5};
\def\shift{-6};
\def\medline{0.04cm};
\def\tline{0.04cm};
\def\tvxrad{0.075cm};
\def\incircrad{1};

\begin{scope}[shift={(\shift,0)}]

\foreach \x in {0,...,8}
{
\draw coordinate (B\x) at  (40*\x:\incircrad);
}

\draw ($0.9*(B4)+0.9*(B5)$) node {\large $T:$};

\draw [line width=\tline] (B0) -- (B5);
\draw [line width=\tline] (B4) -- (B5);
\draw [line width=\tline] (B3) -- (B5);
\draw [line width=\tline] (B0) -- (B6);

\foreach \x in {0,3,4,5,6}
{
\draw [fill=black] (B\x) circle [radius=\tvxrad];
}

\end{scope}

\foreach \x in {0,...,8}
{
\draw coordinate (A\x) at  (40*\x:\circrad);
}

\draw ($0.8*(A4)+0.8*(A5)$) node {\large $K_9:$};

\foreach \x in {0,...,7}
{
\pgfmathtruncatemacro\y{\x+1};
\draw [line width=\medline,darkgreen!50] (A\x) -- (A\y);
}
\foreach \x in {8}
{
\pgfmathtruncatemacro\y{\x-9+1};
\draw [line width=\medline,darkgreen!50] (A\x) -- (A\y);
}

\foreach \x in {0,...,6}
{
\pgfmathtruncatemacro\y{\x+2};
\draw [line width=\medline,blue!50] (A\x) -- (A\y);
}
\foreach \x in {0,...,5}
{
\pgfmathtruncatemacro\y{\x+3};
\draw [line width=\medline,black!50] (A\x) -- (A\y);
}
\foreach \x in {0,...,4}
{
\pgfmathtruncatemacro\y{\x+4};
\draw [line width=\medline,red!50] (A\x) -- (A\y);
}

\foreach \x in {7,...,8}
{
\pgfmathtruncatemacro\y{\x-9+2};
\draw [line width=\medline,blue!50] (A\x) -- (A\y);
}
\foreach \x in {6,...,8}
{
\pgfmathtruncatemacro\y{\x-9+3};
\draw [line width=\medline,black!50] (A\x) -- (A\y);
}
\foreach \x in {5,...,8}
{
\pgfmathtruncatemacro\y{\x-9+4};
\draw [line width=\medline,red!50] (A\x) -- (A\y);
}

\draw [line width=0.1cm,red] (A0) -- (A5);
\draw [line width=0.1cm,darkgreen] (A4) -- (A5);
\draw [line width=0.1cm,blue] (A3) -- (A5);
\draw [line width=0.1cm,black] (A0) -- (A6);

\foreach \x in {0,...,8}
{
\draw [fill=black] (A\x) circle [radius=\vxrad];
\draw ($1.2*(A\x)$) node {\x};
}

\end{tikzpicture}

\vspace{-0.3cm}
  \caption{The ND-coloring of $K_9$ and a rainbow copy of a tree $T$ with four edges. The color of each edge corresponds to its Euclidean length.  By taking cyclic shifts of this tree around the centre of the picture we obtain $9$ disjoint copies of the tree decomposing $K_9$ (and thus a proof of Ringel's Conjecture for this particular tree). To see that this gives 9 disjoint trees, notice that each edge must be shifted to the other edges of the same color (since shifts are isometries).}\label{FigureIntro}
  
  \vspace{-0.1cm}
  
\end{figure}
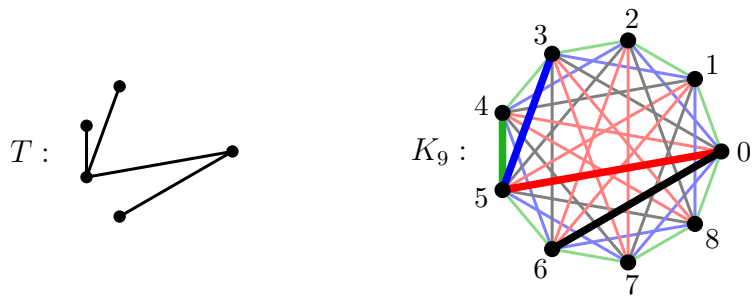

To see the connection between Theorem~\ref{almostspan} and Conjecture~\ref{ringelconj} consider the following edge-coloring of the complete graph with vertex set $\{0,1,\dots,2n\}$.
This coloring, which we call the near distance (ND-) coloring,
was introduced by Kotzig~\cite{rosa1966certain} to study Ringel's conjecture. Color the edge $ij$ by color $k$, where $k\in [n]$, if either $i=j+k$ or $j=i+k$ with addition modulo $2n+1$. As we explain below, it is easy to show (see also Figure \ref{FigureIntro}) that if an $(n+1)$-vertex tree has a rainbow embedding into the ND-coloring of $K_{2n+1}$ then Conjecture~\ref{ringelconj} holds for that tree.

\begin{proof}[Proof of Theorem~\ref{approxringelkotzig}] Let $\ell=(1+\eps/3)n$.
	Consider the ND-coloring of the complete graph $K_{2\ell+1}$, defined above. This is a locally 2-bounded coloring of $K_{2\ell+1}$, which thus, by Theorem~\ref{almostspan}, contains a rainbow copy of $T$, $S_0$ say. Now, for each $i\in [\ell]$, let $S_i$ be the tree with vertex set $\{v+i:v\in V(S_0)\}$ and edge set
	$\{\{v+i,w+i\}:vw\in E(S_0)\}$.

	Note that the color of $vw\in E(K_{2\ell+1})$ is the same as the color of the edge $\{v+i,w+i\}$ for each $i\in [2\ell]$, and under the translation $x\mapsto x+1$ the edge $vw$ moves around all $2\ell+1$ edges with the same color. Thus, each tree $S_i$ is rainbow, and all the trees $S_i$ are disjoint.
\end{proof}

\noindent
\textbf{Graph labeling.}\,
Graph labeling originated in methods introduced by Rosa~\cite{rosa1966certain} in 1967 as a potential path towards proving Ringel's conjecture. In the intervening decades, a large body of work has steadily developed concerning different labelings.  Labeled graphs serve as useful models for a broad range of applications in coding theory,
circuit design, communication networks, database management, secret sharing schemes, cryptology, and models for constraint programming over finite domains (see the extensive survey \cite{gallian2009dynamic}). One old, well-known, conjecture in this area concerns harmonious labelings. This type of labeling was introduced by Graham and Sloane~\cite{GS80} and arose naturally out of the study of additive bases. Given an Abelian group $\Gamma$ and a graph $G$, we say that a labeling $f:V(G)\to \Gamma$ is $\Gamma$-harmonious if the map $f':E(G)\to \Gamma$ defined by $f'(xy)=f(x)+f(y)$ is injective.
In the case when $\Gamma$ is a group of integers modulo $n$ we omit it from our notation and simply call such a labeling harmonious.
In the particular case of an $n$-vertex tree, Graham  and Sloane asked for a harmonious labeling using $\mathbb{Z}_{n-1}$ in which each label is used on some vertex, so that a single label is used on two vertices -- where this exists we call the tree harmonious. More generally, we also call a graph with $m$ edges and at most $m$ vertices harmonious if it has an injective harmonious labeling with $\mathbb{Z}_m$.
According to an unpublished result of Erd\H{o}s~\cite{GS80}, almost all graphs are not harmonious. On the other hand, Graham and Sloane~\cite{GS80} in 1980 made the following conjecture for trees.

\begin{conjecture}
All trees are harmonious.
\end{conjecture}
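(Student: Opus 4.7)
The plan is to reformulate harmoniousness as a rainbow embedding problem, analogous to how the ND-coloring translates Ringel's conjecture into a rainbow tree embedding in $K_{2n+1}$. Given an $n$-vertex tree $T$, take $K_{n-1}$ with vertex set $\mathbb{Z}_{n-1}$ and the Cayley sum coloring $c(\{x,y\}) = x+y \pmod{n-1}$; this is a proper (locally $1$-bounded) coloring in which each of the $n-1$ colors forms a (near-)perfect matching. A harmonious labeling of $T$ is then equivalent to a map $f:V(T)\to\mathbb{Z}_{n-1}$ that is injective except on one nonadjacent pair $\{u,v\}$ (taken at distance at least $3$ in $T$) and whose induced edge colors $f(x)+f(y)\pmod{n-1}$ bijectively enumerate $\mathbb{Z}_{n-1}$. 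Equivalently, letting $T/uv$ denote the unicyclic graph on $n-1$ vertices obtained by identifying $u$ and $v$, a harmonious labeling is a rainbow spanning embedding of $T/uv$ into the sum-colored $K_{n-1}$.

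My strategy has three steps: (i) select nonadjacent $u,v\in V(T)$ at distance at least $3$ in $T$ (handling small-diameter trees such as stars and double-stars by a direct ad-hoc construction), and reserve one ``closing color'' $s^*\in\mathbb{Z}_{n-1}$ to be realized by the final cycle-edge; (ii) delete one cycle-edge $e^*$ of $T/uv$ to obtain a spanning tree $T^\dagger$ of $K_{n-1}$, and produce a rainbow embedding of $T^\dagger$ into $K_{n-1}$ using the $n-2$ colors $\mathbb{Z}_{n-1}\setminus\{s^*\}$, while forcing the two endpoints of $e^*$ onto a pair of labels summing to $s^*$ modulo $n-1$; (iii) reinsert $e^*$ to complete the rainbow embedding of $T/uv$, yielding the desired harmonious labeling of $T$.

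The main obstacle — and the reason this conjecture has remained open for decades — is the exactness demanded in step (ii). Theorem \ref{almostspan} produces a rainbow embedding of a tree on only $(1-\eps)(n-1)$ vertices into $K_{n-1}$, so it leaves $\Omega(n)$ vertices uncovered and cannot, on its own, deliver the required spanning embedding; nor does it allow any control over where a prescribed pair of vertices lands. Moreover, the Cayley sum coloring is maximally rigid: each color class is a single matching, so any local edge-swap simultaneously moves two vertices and modifies two edge sums, leaving essentially no room for routine absorption. A viable attack would need an arithmetic-aware distributive absorber (in the spirit of Montgomery's work on Ryser-Brualdi-Stein) that exploits the group structure of $\mathbb{Z}_{n-1}$, using many vertex-disjoint algebraic gadgets whose swap operations give enough flexibility to simultaneously correct missing colors, reposition misplaced vertices, and enforce the closing condition at the endpoints of $e^*$. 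Strengthening Theorem \ref{almostspan} (or its proof) to work without the $\eps$-slack while accommodating these additional arithmetic constraints is where the bulk of the work would lie.
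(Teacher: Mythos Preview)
This statement is a \emph{conjecture}, not a theorem: the paper does not prove it and presents it explicitly as an open problem due to Graham and Sloane. There is therefore no ``paper's own proof'' to compare against. What the paper does prove is the approximate version, Theorem~\ref{thmharlabel}, that every $n$-vertex tree admits an injective $\Gamma$-harmonious labeling for any abelian $\Gamma$ of order $n+o(n)$; this follows in one line from Theorem~\ref{almostspan} applied to the Cayley sum coloring of $K_{|\Gamma|}$.

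Your reformulation of harmoniousness as a rainbow spanning embedding into the sum-colored $K_{n-1}$ is correct and is exactly the viewpoint the paper adopts for the approximate result. However, your write-up is not a proof of the conjecture, and you say so yourself: step~(ii) requires a rainbow \emph{spanning} embedding with a prescribed sum constraint on a designated pair of vertices, and you correctly observe that Theorem~\ref{almostspan} falls short by an $\eps n$ gap with no control over vertex placement. The ``arithmetic-aware distributive absorber'' you gesture at is not constructed, and building one for this setting is precisely the open problem. So what you have submitted is a reasonable research outline, not a proof attempt that can be assessed for correctness; the genuine gap is that the entire technical content of a proof---closing the $\eps$-slack while enforcing the closing color---is deferred to future work.
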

This conjecture is known for many special classes of trees such as caterpillars, trees with at most $31$ vertices, and some other classes of trees (see Chapter 2 of \cite{gallian2009dynamic} and the references therein). Moreover, 
{\.Z}ak  \cite{zak2009harmonious} proposed an asymptotic weakening of this conjecture. He asked to prove that every tree has an injective $\mathbb{Z}_{n+o(n)}$-harmonious labeling.

Note that, for any injective labeling of the vertices of the complete graph by elements of an Abelian group, the edge-coloring which is obtained by taking sums of labels of vertices is proper. Therefore we can use  Theorem~\ref{almostspan} to study such colorings. In particular, we can obtain the following general result which shows that every tree is almost harmonious. If $\Gamma$ is taken to be the cyclic group, this theorem proves {\.Z}ak's conjecture from~\cite{zak2009harmonious}.

\begin{theorem}\label{thmharlabel}
Every $n$-vertex tree $T$ has an injective $\Gamma$-harmonious labeling for any Abelian group $\Gamma$ of order $n+o(n)$.
\end{theorem}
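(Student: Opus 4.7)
The plan is to reduce Theorem~\ref{thmharlabel} to a direct application of Theorem~\ref{almostspan}, exploiting the correspondence already used implicitly in the survey: harmonious labelings valued in an Abelian group $\Gamma$ correspond exactly to rainbow embeddings into the complete graph on vertex set $\Gamma$ edge-colored by sums. Concretely, given any Abelian group $\Gamma$ of order $N$, I would consider the complete graph $K_N$ with vertex set identified with $\Gamma$ and color each edge $xy$ by the group element $c(xy) := x + y$. Because for every fixed $v \in \Gamma$ the translation $u \mapsto v + u$ is a bijection of $\Gamma$, all edges incident to $v$ receive pairwise distinct colors, so this is a proper edge-coloring of $K_N$, i.e.\ a locally $1$-bounded coloring.

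Next I would fix $\varepsilon > 0$ and use the hypothesis $|\Gamma| = n + o(n)$ to assume, for all sufficiently large $n$, that $N = |\Gamma| \geq n/(1-\varepsilon)$. Theorem~\ref{almostspan} applied with $k = 1$ then guarantees a rainbow copy of $T$ inside $K_N$, since $T$ has $n \leq (1-\varepsilon) N$ vertices. Let $g : V(T) \to \Gamma$ be the corresponding embedding. As an embedding into the simple graph $K_N$, the map $g$ is automatically injective on vertices. Moreover, the rainbow condition asserts precisely that the values $\{g(x) + g(y) : xy \in E(T)\}$ are pairwise distinct in $\Gamma$, which is exactly the definition of an injective $\Gamma$-harmonious labeling of $T$.

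The main conceptual step is the translation in the first paragraph: recognising that the Cayley-sum coloring of $K_{|\Gamma|}$ is a locally $1$-bounded coloring whose rainbow subgraphs are, by definition, injective $\Gamma$-harmonious embeddings. Once this correspondence is set up, the rest is an immediate invocation of Theorem~\ref{almostspan}. I do not expect any genuine obstacle at this stage, because Theorem~\ref{almostspan} has no degree or structural assumption on $T$ beyond its vertex count --- which is precisely the feature needed to handle arbitrary trees without restrictions. All of the difficulty is absorbed into Theorem~\ref{almostspan} itself (where the novel random matching and absorption techniques live); the harmonious labeling statement is then obtained essentially for free.
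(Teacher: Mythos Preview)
Your proposal is correct and matches the paper's own proof essentially line for line: identify $V(K_{|\Gamma|})$ with $\Gamma$, color $xy$ by $x+y$ to get a proper (locally $1$-bounded) coloring, and invoke Theorem~\ref{almostspan} with $k=1$ to obtain a rainbow copy of $T$, which is precisely an injective $\Gamma$-harmonious labeling.
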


\begin{proof}[Proof of Theorem~\ref{thmharlabel}] Suppose $|\Gamma|=\ell= (1+\eps)n$ for some $\eps>0$ and let $T$ be a tree on $n$ vertices. Identify the vertices of $K_\ell$ with the elements of $\Gamma$ and consider an edge-coloring that colors the edge $ij$ by $i+j$. This is a proper coloring, so, when $n$ is large, by Theorem~\ref{almostspan} it contains a rainbow copy of $T$, which corresponds to a harmonious labeling. By taking $\Gamma=\mathbb{Z}_{\ell}$, we deduce that for any $\eps>0$ there exists $n_0$ such that any tree with $n\geq n_0$ vertices has an injective harmonious labeling with at most $(1+\eps)n$ labels.
\end{proof}

\noindent
\textbf{Orthogonal double covers.}\,
Theorem~\ref{almostspan} can also be used to obtain an asymptotic solution for another old graph decomposition problem. An  orthogonal double cover of the complete graph $K_n$ by a graph $G$ is a collection $G_1,\ldots,G_n$ of subgraphs of $K_n$ such that each $G_i$ is a copy of $G$, every edge of $K_n$ belongs to exacly two of the copies and any two copies have exactly one edge in common. The study of orthogonal double covers was originally motivated by problems in statistical design theory (see \cite{dinitz1992contemporary}, Chapter 2).
Since the number of copies in the double cover is $n$, it follows that $G$ must have $n-1$ edges.
The central problem here is to determine for which graphs $G$ there is a an orthogonal double cover of $K_n$ by $G$. In full generality, this extends several well-known problems, such as the existence question for biplanes or symmetric $2$-designs (see, e.g., \cite{hughes1978biplanes}). About 20 years ago, Gronau, Mullin, and Rosa \cite{gronau1997orthogonal} made the following conjecture about trees.
\begin{conjecture} \label{Conjecture_Orthogonal}
Let $T$ be an $n$-vertex tree which is not a path on $3$ edges. Then, $K_n$ has an orthogonal double cover by copies of $T$.
\end{conjecture}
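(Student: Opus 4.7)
The plan is to prove the exact conjecture for all $n$-vertex trees $T \neq P_4$ by combining an explicit cyclic seed construction with a Montgomery-style distributive absorption argument to force the exact intersection pattern. Since the statement concerns an $n$-vertex decomposition of $K_n$ itself, not of a $(1+\eps)n$-vertex thickening, a direct application of Theorem~\ref{almostspan} cannot suffice, and careful absorption will be essential.

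\textbf{Step 1: A rotational reduction.} First I would identify $V(K_n)$ with $\mathbb{Z}_n$ and search for a single embedding $T_0$ of $T$ into this vertex set whose cyclic translates $T_i = T_0 + i$ form an ODC. Translating the ODC axioms into conditions on $T_0$ yields: (i) each nonzero $d \in \mathbb{Z}_n$ appears as an edge-difference of $T_0$ with a prescribed multiplicity (essentially $2$, with a parity adjustment when $d = n/2$ and $n$ is even), which enforces the double-cover property; and (ii) for every nonzero $i$, exactly one edge $uv$ of $T_0$ satisfies $\{u+i, v+i\} \in E(T_0)$, which enforces the intersection-$1$ property.

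\textbf{Step 2: Building the seed via rainbow methods with absorption.} To construct $T_0$ for generic $T$, I would embed the edges of $T$ one by one into the locally $2$-bounded ``difference coloring'' of $K_n$ on vertex set $\mathbb{Z}_n$, where edge $\{x,y\}$ receives the color $\min(|x-y|, n-|x-y|)$. Theorem~\ref{almostspan} already yields a nearly spanning rainbow copy in this coloring. I would strengthen this to a spanning rainbow embedding with the exact multiplicity profile required by (i) by reserving a small subtree of $T$ as an absorber and a random pool of vertices/colors as a reservoir, then using the absorber to correct any multiplicity deficit at the end. Condition (ii) is more rigid, and I would address it by designing the absorber to also carry an in-built ``swap'' gadget which exchanges two correctly-colored edges while preserving both the tree isomorphism type and condition (i), and can be iterated to fix discrepancies in the intersection count with each translate $T_i$.

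\textbf{Step 3: Exceptional trees and parities.} Several tree families admit direct constructions bypassing the seed approach. For the star $K_{1,n-1}$ the $n$ stars centered at the $n$ vertices of $K_n$ trivially form an ODC. For paths $P_n$ one can adapt a Walecki-type Hamilton decomposition. Caterpillars and double-stars are handled by classical starter-adder techniques from design theory. Even $n$ requires separate bookkeeping because edges of difference $n/2$ are self-translated; here I would use two seeds $T_0, T_0'$ not related by translation, so that the $n$ copies are split between translates of $T_0$ and of $T_0'$. Small exceptional cases, including the excluded $P_4$ on $K_4$, are handled by finite case analysis.

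\textbf{Main obstacle.} The hardest part is condition (ii): forcing every pair of copies to share \emph{exactly} one edge is a rigid coincidence condition between $T_0$ and each of its $n-1$ translates, and couples very weakly with condition (i). Rainbow and packing methods, including Theorem~\ref{almostspan}, are well-suited to producing a single large rainbow substructure but offer essentially no control over how two such substructures intersect. Designing a single absorbing gadget which can simultaneously correct the difference multiplicities in (i) and the intersection counts in (ii) --- up or down, for every translate, while remaining isomorphic to $T$ --- is where the main technical weight will lie, and this coupling of global constraints is precisely the reason that approximate versions of the conjecture are accessible while the exact statement remains open.
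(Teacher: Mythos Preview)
The statement you are attempting to prove is labeled in the paper as a \emph{conjecture}, not a theorem, and the paper does not prove it. The paper explicitly presents Conjecture~\ref{Conjecture_Orthogonal} as an open problem due to Gronau, Mullin and Rosa, notes that it is only known for special classes of trees (diameter $\le 3$, trees on $\le 13$ vertices, etc.), and then proves only the approximate Theorem~\ref{orthogonal}: for $n=2^k$ and any tree on $n-o(n)$ vertices, there are $n$ copies in $K_n$ covering every edge at most twice with any two copies sharing at most one edge. That proof is a short direct application of Theorem~\ref{almostspan} to the Cayley-sum coloring of $\mathbb{Z}_2^k$, with no absorption and no attempt to achieve exact intersection counts.

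Your proposal is therefore not comparable to any proof in the paper; it is a research outline for an open problem. On its own terms it is also not a proof: Step~2 promises an absorber that simultaneously repairs the difference-multiplicity profile (i) and the pairwise-intersection counts (ii), but you give no construction of such a gadget, and you yourself identify in the final paragraph exactly why this is hard --- condition (ii) couples $T_0$ to all $n-1$ of its translates at once, and rainbow/absorption machinery gives no handle on mutual intersections of copies. Your closing sentence (``this coupling of global constraints is precisely the reason that approximate versions of the conjecture are accessible while the exact statement remains open'') is an accurate assessment of the situation and an implicit admission that the plan, as written, does not close the gap.
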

This conjecture is known to hold for certain classes of trees including trees with diameter $\leq 3$ and trees with $\leq 13$ vertices (see \cite{gronau1997orthogonal, leck1997orthogonal} for a full list).
To see the connection between Conjecture~\ref{Conjecture_Orthogonal} and Theorem~\ref{almostspan}, we will consider a Caley sum coloring of a complete graph on $2^k$ vertices, where edges are colored by the sum of their endpoints in the abelian group $\mathbb{Z}_2^k$. By considering such a coloring we can show that Conjecture~\ref{Conjecture_Orthogonal} is asymptotically true whenever $n$ is a power of $2$.
\begin{theorem}\label{orthogonal}
Let $n=2^k$ and let $T$ be a tree on $n-o(n)$ vertices. Then $K_n$ contains $n$ copies of $T$ such that
every edge of $K_n$ belongs to at most two copies and any two copies have at most one edge in common.
\end{theorem}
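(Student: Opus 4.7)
The plan is to adapt the translation trick that proved Theorems \ref{approxringelkotzig} and \ref{thmharlabel}: first produce a single rainbow copy $T_0$ of $T$ in a carefully chosen proper edge-coloring of $K_n$, then obtain all $n$ copies of $T$ as translates of $T_0$ under the action of an abelian group acting on $V(K_n)$.

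Concretely, using $n=2^k$, I would identify $V(K_n)$ with $\mathbb{F}_2^k$ and use the Cayley sum coloring in which $\{x,y\}$ receives color $x+y \in \mathbb{F}_2^k \setminus \{0\}$. This coloring is proper, hence locally $1$-bounded, so Theorem \ref{almostspan} applied to the tree $T$ (which has only $n - o(n) \le (1-\eps)n$ vertices for every fixed $\eps > 0$ and large $n$) yields a rainbow copy $T_0 \subseteq K_n$ of $T$. The $n$ candidate copies are then the translates $T_g := T_0 + g$ for $g \in \mathbb{F}_2^k$, obtained by translating every vertex of $T_0$ by $g$.

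The whole verification now rests on one observation: translation preserves the Cayley sum coloring, since $\{u+g, v+g\}$ has color $u+v$, identical to the color of $\{u,v\}$. Three items must then be checked. First, all $n$ translates are distinct, because if translation by some $d \neq 0$ fixed $T_0$, then for every edge $\{u,v\} \in E(T_0)$ the edge $\{u+d, v+d\}$ would also lie in $T_0$ with the same color $u+v$; rainbowness forces $\{u+d,v+d\} = \{u,v\}$, i.e.\ $d = u+v$, and applying this to every edge of $T_0$ makes them all share color $d$, contradicting rainbowness as soon as $T$ has at least two edges. Second, an edge $\{a,b\}$ of $K_n$ lies in $T_g$ iff $\{a+g, b+g\}$ is the unique edge of $T_0$ of color $a+b$, which gives at most two candidate values of $g$ (one for each way of matching endpoints). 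Third, if $\{a,b\} \in T_g \cap T_h$ for $g \neq h$, then $\{a+g, b+g\}$ and $\{a+h, b+h\}$ are both edges of $T_0$ of color $a+b$, so by rainbowness they coincide; this forces $a+b = g+h$, so the color of any shared edge is prescribed to equal $g+h$, and since $T_0$ has at most one edge of that color, the shared edge is uniquely determined.

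The only real obstacle is producing the rainbow copy $T_0$ — this is exactly where Theorem \ref{almostspan} is essential, and its novelty (the absence of any degree restriction on $T$) is precisely what is needed, since $T$ in the statement is an \emph{arbitrary} tree. Once $T_0$ is secured, the rigidity of the Cayley sum coloring under translation makes the three required properties essentially automatic, which is why this particular coloring is the natural choice for the problem.
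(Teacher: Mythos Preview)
Your proposal is correct and follows essentially the same approach as the paper: identify $V(K_n)$ with $\mathbb{F}_2^k$, use the Cayley sum coloring, invoke Theorem~\ref{almostspan} to obtain a rainbow copy $T_0$, and take the $n$ translates $T_0+g$. Your verification is slightly more explicit than the paper's (you separately argue that the translates are pairwise distinct, whereas the paper leaves this implicit in the ``at most one common edge'' claim), but the key observations---that translation preserves edge colors and that rainbowness of $T_0$ forces any common edge of $T_g$ and $T_h$ to have color $g+h$---are identical.
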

\begin{proof}
Identify $V(K_n)$ with the group $\mathbb{Z}_2^k$. Color each edge $ij$ using the color $i+j\in \mathbb{Z}_2^k$. By Theorem~\ref{almostspan}, $K_n$ has a rainbow copy $S$ of $T$.  For all $x\in \mathbb{Z}_2^k$, define a permutation $\phi_x:V(K_n)\to V(K_n)$ by $\phi_x(v)=x+v$ (with addition in $\mathbb{Z}_2^k$).
Use $\phi_x(S)$ to denote the subgraph of $K_n$ with edges $\{\phi_x(a)\phi_x(b): ab\in E(S)\}$. Notice that, since $\phi_x$ is a permutation of $V(K_n)$, $\phi_x(S)$ is a tree isomorphic to $T$. We claim that the family of $n$ trees $\{\phi_x(S): x\in \mathbb{Z}_2^k\}$ satisfies the theorem.

Notice that since $\phi_x(a)+\phi_x(b)=a+x+b+x=a+b$, the permutations $\phi_x$ preserve the colors of edges.
This implies that the trees $\phi_x(S)$ are all rainbow.
Notice that the only edges fixed by the permutations $\phi_x$ are those colored by $x$.
Finally, notice that $\phi_{x-y}\circ \phi_y=\phi_{x}$. Combining these, we have that if a color $c$ edge is in two trees $\phi_x(S)$ and $\phi_y(S)$, then $x-y=c$ in $\mathbb{Z}_2^k$. This implies that the trees $\phi_x(S)$ cover any edge at most twice, and any pair of them have at most one edge in common.
\end{proof}

\noindent
\textbf{Ringel's conjecture revisited.}\,
Recall the near distance (ND-)coloring, which was used to prove the asymptotic version of Ringel's conjecture.
It colors the edge $ij$ of the complete graph $K_{2n+1}$ by color $k$, where $k\in [n]$, if either $i=j+k$ or $j=i+k$ with addition modulo $2n+1$.
In the application of Theorem \ref{almostspan} we only used  the fact that this coloring is $2$-bounded to find a rainbow copy of an arbitrary tree of size $(1-\eps)n$. But the (ND-)coloring has a lot of structural properties and is very symmetric. So it is natural to ask whether one can exploit the structure of 
the (ND-)coloring to embed in it rainbow trees with exactly $n$ edges. Using several critical new ideas combined with the approach from Theorem~\ref{almostspan}, this was 
done by Montgomery, Pokrovskiy and Sudakov~\cite{MPS2}. 

\begin{theorem}
\label{ringel}
For every sufficiently large $n$, the complete graph $K_{2n+1}$ can be decomposed into copies of any tree with $n$ edges.
\end{theorem}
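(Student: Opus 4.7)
The plan is to reduce the decomposition problem to finding a single rainbow spanning copy of $T$ in the ND-coloring of $K_{2n+1}$. Indeed, the cyclic shift $\phi_x : v \mapsto v+x$ on $\mathbb{Z}_{2n+1}$ preserves $|i-j| \bmod (2n+1)$ and hence the color of every edge. Since $2n+1$ is odd, the shift action has trivial stabilizer on every edge, so the orbit of any color-$k$ edge consists of all $2n+1$ edges of color $k$. Consequently, if $T_0$ is a rainbow spanning copy of $T$, then the family $\{\phi_x(T_0) : x \in \mathbb{Z}_{2n+1}\}$ consists of $2n+1$ pairwise edge-disjoint copies of $T$ whose union is all of $K_{2n+1}$. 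So everything boils down to showing that, for large $n$, the ND-coloring admits a rainbow spanning copy of every $n$-edge tree $T$.

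The plan to produce such a copy is absorption. First I would isolate a small subtree $A \subseteq T$ with $O(\eps n)$ edges that can serve as a flexible absorber: for any sufficiently small defect set $D \subseteq V(K_{2n+1})$ together with the corresponding small set of ``unused'' colors, $A$ should admit many rainbow embeddings into $K_{2n+1}$ that cover exactly $D$ using exactly those colors. Such an object can be built using the strong algebraic structure of the ND-coloring — in particular the abundance of color-preserving translations — via Montgomery's distributive absorption framework, stitching together many tiny, locally flexible gadgets into one global absorber. Second, I would embed the skeleton $T \setminus A$ approximately into the remaining portion of $K_{2n+1}$ using the tree-embedding machinery behind Theorem \ref{almostspan} (the random rainbow matching plus deterministic high-degree embedding), arranging for only a small controlled defect of vertices and colors to be left over. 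Third, I would use the absorbing property of $A$ to re-embed it so that it exactly swallows the leftover defect, yielding a rainbow spanning copy of $T$; cyclic shifts then finish the decomposition.

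The main obstacle is that exact embedding demands that every one of the $n$ colors be hit exactly once and every vertex be covered exactly once, which is far more rigid than the approximate version. Building an absorber that functions uniformly for every possible tree $T$ is the hard point: the standard absorption paradigm exploits trees with many leaves to provide maneuvering room, but a tree with very few leaves or with many clustered high-degree vertices leaves much less flexibility, and one has to introduce new absorber types tailored to such structures. Simultaneously, one must respect the rigid locally $2$-bounded coloring structure of the ND-coloring, so every swap performed during absorption must preserve both the tree shape and the rainbow property — a significantly more delicate constraint than the approximate setting of Theorem \ref{approxringelkotzig}, where one had the buffer of $\eps n$ extra vertices and colors to absorb any mismatches.
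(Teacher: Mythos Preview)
Your proposal is correct and matches the paper's approach: reduce to finding a single rainbow spanning copy of $T$ in the ND-coloring via cyclic shifts, then combine an approximate random embedding (building on the machinery behind Theorem~\ref{almostspan}) with an absorption step to complete the embedding exactly. The paper additionally singles out two technical points you underemphasize --- a fully deterministic treatment of vertices of very high degree that exploits the algebraic structure of the ND-coloring, and the crucial need to maintain some independence between the vertices and the colors used during the random embedding phase --- but the overall architecture is the same.
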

\noindent
This theorem constructs a decomposition of the complete graph by considering cyclic shifts of one copy of a given tree (see Figure~\ref{FigureIntro}). The existence of such a cyclic decomposition was separately conjectured by Kotzig~\cite{rosa1966certain}. Therefore, this also gives a proof of the conjecture by Kotzig for large trees. A different proof of Ringel's conjecture for large $n$ was obtained in \cite{KeevashStaden}.

The proof of Theorem \ref{ringel} in~\cite{MPS2} is rather involved for discussing in detail. It introduces three key new methods. Firstly, when dealing with very high degree vertices, it uses a completely deterministic approach for finding a rainbow copy of the tree. This approach heavily relies on the features of the ND-colouring. 
Secondly, it uses techniques to embed 99\% of any tree $T$ randomly into the ND-colouring in a rainbow fashion. The techniques in~\cite{MPS2} are based on the probabilistic methods in~\cite{MPS1}, except we need the random embedding to maintain some degree of independence between the vertices and the colors used for this embedding. High degree vertices in $T$ force dependencies between the vertices and the colors used on any randomized embedding of large subgraphs of $T$. These dependencies were a major barrier to generalising previous random approaches to Ringel's Conjecture to trees with high degree vertices. Maintaining some independence between vertices and colors here is a subtle task and a critical part of the proof.

Finally, having embedded 99\% of the tree $T$, the embedding in~\cite{MPS2} is completed using absorption. This is a particularly challenging task, and so we prepare for it during our initial embedding by carefully constructing an absorbing structure which assists with the precise task of completing the embedding while using each unused color exactly once. The novelty in~\cite{MPS2}, compared to the classical applications of the absorption method, is in the interplay of our constructions with the randomized parts of our embedding, the constructions themselves, and the use of absorption to embed graphs with high degree vertices. 

\bigskip
\noindent 
{\bf Acknowledgments.} The author would like to express his gratitude to Aleksa Milojevi\'c for many thoughtful discussions, 
carefully reading the manuscript, and drawing several illustrations. We also wish to thank Richard Montgomery and Matija Buci\'c for 
their helpful remarks and suggestions.

\end{document}